\documentclass[11pt]{amsart}

\usepackage{amsfonts,amssymb,amsmath}
\usepackage{graphicx}
\usepackage{psfrag}
\usepackage{color,xcolor}
\usepackage{verbatim}
\usepackage{epstopdf}
\usepackage[text={6in,9in},centering]{geometry}
\usepackage[colorlinks,hypertexnames=false]{hyperref}
\newtheorem{thm}{Theorem}[section]
\newtheorem{lem}[thm]{Lemma}

\theoremstyle{remark}
\newtheorem{rem}[thm]{Remark}

\numberwithin{equation}{section}

\newcommand{\R}{{\mathbb R}}
\newcommand{\ZI}{{\mathbb Z}[i]}
\newcommand{\Z}{{\mathbb Z}}

\newcommand{\N}{{\mathcal N}}
\newcommand{\C}{{\mathbb C}}
\newcommand{\A}{{\mathcal A}}
\newcommand{\T}{{\mathcal T}}
\newcommand{\Q}{{\mathbb Q}}

\def\updimB{{\overline{\dim} _{\rm M}}}
\def\lowdimB{{\underline{\dim} _{\rm M}}}
\def\dimB{\dim_{\rm M}}
\def\upM{{\overline{\mathcal M}}}
\def\lowM{{\underline{\mathcal M}}}
\def\M{{\mathcal M}}

\def\vep{\varepsilon}
\def\leb{\mathcal{L}}
\newcommand{\card}{{\rm card\,}}
\newcommand{\dist}{{\rm dist\,}}
\newcommand{\intt}{{\rm int\,}}

\begin{document}

\title{Minkowski geometry of finite Hurwitz continued fractions}

\author{Yifei Gu}
\address{Department of Mathematics, East China Normal University,  Shanghai 200241, P. R. China}
\email{52275500012@stu.ecnu.edu.cn}

\author{Lai Jiang}
\address{School of Fundamental Physics and Mathematical Sciences, Hangzhou Institute for Advanced Study, UCAS, Hangzhou 310024, P. R. China}
\email{jianglai@ucas.ac.cn}

\thanks{Corresponding author: Lai Jiang}

\subjclass[2020]{Primary 11A55; Secondary 28A80.}

\date{}

\keywords{Hurwitz continued fractions, Continued fractions, Gaussian rationals, Minkowski dimension, Minkowski content.}

\begin{abstract}
We study the Minkowski geometry of finite-level sets of Gaussian rationals defined by the lengths of their Hurwitz continued fraction expansions. For each $m\geq 1$, let $H_m$ be the set of points in the fundamental square whose Hurwitz continued fraction expansions have length exactly $m$. We also introduce the relaxed recursive sets defined by $G_0=\{0\}$ and
$$G_m=\Big\{\frac{1}{u+v}: u \in\mathbb{Z}[i],\ v\in G_{m-1},\ |u+v|>1 \Big\}.$$
We prove that for every $m\geq 1$,
$$\dim_{\rm M} H_m=\dim_{\rm M} G_m=1.$$
We further determine the critical one-dimensional Minkowski content of these sets. We have ${\mathcal M}^1(H_1)={\mathcal M}^1(G_1)=4\pi\log(1+\sqrt{2})$, whereas ${\mathcal M}^1(H_m)={\mathcal M}^1(G_m)=\infty$ for every $m\geq 2$.
\end{abstract}

\maketitle

\section{Introduction}

\subsection{Background and motivation}
Continued fractions provide a symbolic and dynamical framework for rational approximation and are fundamental in metric number theory and fractal geometry; see \cite{DK2002,K1964} for general background.
A substantial part of the modern theory concerns sets defined by restrictions or growth conditions on partial quotients, shrinking targets, and related multifractal quantities.


Many of these problems concern infinite expansions and are naturally formulated in terms of Hausdorff dimension. 
Wang and Wu determined the Hausdorff dimension of several sets defined by large or restricted partial quotients \cite{WW2008A,WW2008B}.
Fan, Liao, Wang and Wu studied the Khintchine and Lyapunov exponents of continued fractions \cite{FLWW2009}, while Liao and Rams investigated dimensional phenomena associated with prescribed growth of sums of partial quotients \cite{LR2016}. 
Shrinking target problems and continued-fraction analogues of the Jarn\'ik--Besicovitch theorem were investigated in \cite{LWWX2014,WWX2016}, and the distribution of large partial quotients was studied in \cite{TTW2023}. 

In contrast, the present paper considers the finite-level setting, where the relevant sets are countable. For rational numbers, the continued fraction algorithm terminates, and one may ask how large the set of rational numbers with a prescribed expansion length is. Such a set has Hausdorff dimension zero because it is countable, but its Minkowski dimension may still be non-trivial. For $m\geq1$, let
$$
C_m:= \{x\in(0,1)\cap\Q:x\text{ has a continued fraction expansion of length }m\}.
$$
Chen, Jiang and Wu \cite{CJW} proved that $\dimB C_m=1/2$ for every $m\geq1$. 
This naturally leads to the corresponding finite-level problem for Hurwitz continued fractions over the Gaussian integers. 
The complex setting involves additional geometric and admissibility issues arising from the geometry of the Gaussian lattice and the Hurwitz algorithm.

\subsection{Hurwitz continued fractions and Minkowski geometry}

Denote the ring of Gaussian integers by $\ZI=\{a+bi:a,b\in\Z\}$, and let
$$
Q=\Big\{z\in\C:-\frac{1}{2}\leq\operatorname{Re}z<\frac{1}{2},-\frac{1}{2}\leq\operatorname{Im}z<\frac{1}{2}\Big\}
$$
be the fundamental square for the nearest Gaussian integer map. 
The translates $u+Q$ with $u\in\ZI$ form a tiling of $\C$.
For each $w\in\C$, let $[w]\in\ZI$ be the unique Gaussian integer such that $w-[w]\in Q$. 
Write $B(0,1)=\{z\in\C:|z|<1\}$.
The Hurwitz remainder map is defined by
$
	\T: Q \to Q,
$
where $\T (0) =0$ and
$$
	\T(z)=\frac{1}{z}-\Big[\frac{1}{z}\Big] ,\quad z \in Q \setminus \{0\}.
$$
The Hurwitz continued fraction algorithm dates back to Hurwitz \cite{H1887}.
More recent metric and dimensional results include \cite{BGH2025,G2020,NT2025}.
These works mainly concern infinite Hurwitz continued fraction expansions. 
In contrast, we study finite expansions of Gaussian rationals and classify them according to their exact expansion length.

For $z\in Q\setminus\{0\}$, set $z_0=z$ and, whenever $z_{j-1}\neq0$, define
$$
	z_j=\T(z_{j-1}), \qquad a_j=\Big[\frac{1}{z_{j-1}}\Big].
$$
Therefore
$$
	z_{j-1}=\frac{1}{a_j+z_j}.
$$
If $z_m=0$ and $z_j\neq0$ for $0\leq j\leq m-1$, the algorithm terminates after exactly $m$ steps and
$$
z=
\cfrac{1}{a_1+
\cfrac{1}{a_2+
\cfrac{1}{\ddots+
\cfrac{1}{a_m}}}}.
$$
The integer $m$ is called the length of the Hurwitz continued fraction expansion of $z$. We denote the corresponding exact level set by
$$
	H_m=\{z\in Q:z\text{ has a Hurwitz continued fraction expansion of length }m\}.
$$
Every element of $H_m$ is a Gaussian rational.
Conversely, for every Gaussian rational in $Q$, there exists a finite Hurwitz continued fraction.
Therefore, upon setting $H_0=\{0\}$, we have
$$
	\bigcup_{m\geq0}H_m=Q\cap\Q[i].
$$
Here $\Q[i]$ denotes the field of Gaussian rationals. 
Thus the sets $H_m$ give a natural finite-level decomposition of the Gaussian rationals according to the termination time of the Hurwitz algorithm.

To deal with the admissibility condition in the Hurwitz algorithm, we also introduce a relaxed finite-level model that retains only the contraction condition appearing at each inverse branch. 
Set $G_0=\{0\}.$
For every $m\geq1$, define
$$
	G_m= \Big\{\frac{1}{u+v} : u \in\ZI,\ v\in G_{m-1},\ |u+v|>1 \Big\}.
$$
Equivalently, $G_m$ is generated by iterating the M\"obius maps
$$
	T_u(v)=\frac{1}{u+v}, \qquad u\in\ZI,
$$
subject only to the condition $|u+v|>1$. In particular, $G_m\subset B(0,1)$. 
Every finite Hurwitz expansion satisfies this contraction condition, and hence
$$
H_m\subset G_m.
$$
This inclusion will be proved formally in Section~\ref{S2}. 
The converse need not hold because the definition of $G_m$ does not impose the rule of choosing the nearest Gaussian integer.
Thus $H_m$ is the exact Hurwitz level set, whereas $G_m$ is a larger recursive model with a simpler geometric structure.

We measure these countable sets by Minkowski dimension. 
For a non-empty bounded set $\Omega\subset\R^n$ and $r>0$, let $\N_r(\Omega)$ be the smallest number of closed balls of radius $r$ required to cover $\Omega$. 
The lower and upper Minkowski dimensions are
$$
\lowdimB\Omega= \varliminf_{r\to0^+} \frac{\log\N_r(\Omega)}{-\log r},
\qquad
\updimB\Omega= \varlimsup_{r\to0^+} \frac{\log\N_r(\Omega)}{-\log r}.
$$
If these quantities agree, their common value is denoted by $\dimB\Omega$. 

For $r>0$, write $\Omega_r := \{y\in\R^n: |x-y|<r \text{ for some }x\in\Omega \}$.
Let $\leb^n$ denote $n$-dimensional Lebesgue measure.
For subsets of $\C$, we use $\leb^2$ for planar Lebesgue measure.
Equivalently, the lower and upper Minkowski dimensions can be expressed
in terms of the volume of tubular neighbourhoods as
$$
	\lowdimB\Omega = n-\varlimsup_{r\to0^+} \frac{\log\leb^n(\Omega_r)}{\log r}, \qquad
	\updimB\Omega = n-\varliminf_{r\to0^+} \frac{\log\leb^n(\Omega_r)}{\log r}.
$$
For $s\geq0$, the lower and upper $s$-dimensional Minkowski contents are
$$
	\lowM^s(\Omega)= \varliminf_{r\to0^+} \frac{\leb^n(\Omega_r)}{r^{n-s}}, \qquad
	\upM^s(\Omega)= \varlimsup_{r\to0^+} \frac{\leb^n(\Omega_r)}{r^{n-s}}.
$$
If the limit exists in $[0,\infty]$, we write
$$
	\M^s(\Omega)=\lim_{r\to 0^+} \frac{\leb^n(\Omega_r)}{r^{n-s}}.
$$
The most informative case is the critical one, where $s=\dimB\Omega$. 
We refer to \cite{F1990} for general background on Minkowski dimension and Minkowski content.

\subsection{Main results}

Our first result shows that both the exact Hurwitz sets and the relaxed recursive sets share the same Minkowski dimension at every level.

\begin{thm}\label{thm:main}
For every $m\geq1$, we have
$$
\dimB H_m=\dimB G_m=1.
$$
\end{thm}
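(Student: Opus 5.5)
Since $H_m\subset G_m$, it suffices to prove the lower bound $\lowdimB H_m\ge 1$ and the upper bound $\updimB G_m\le 1$ for every $m\ge1$.

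\emph{Lower bound.} The natural first attempt uses $H_1$, which consists of the points $1/u$ with $u\in\ZI$ and $[1/(1/u)]=u$, that is, $1/u\in Q$. This holds for all $u$ with $|u|$ large, so $H_1\supset\{1/u: u\in\ZI,\ |u|\ge 2\}$. For small $r$, consider the points $1/u$ with $|u|\asymp R$, where $R=r^{-1/2}$. There are about $R^2$ such points, their images lie at distance about $1/R$ from $0$, and neighbouring points are separated by about $|u|^{-2}\asymp 1/R^2=r$. Hence one needs about $R^2=r^{-1}$ balls of radius $r$, which gives the exponent $1$.

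To pass to $H_m$, I would apply this to a single branch. Fix admissible $a_2,\dots,a_m$ of large modulus, for instance $a_j=Ni$ with $N$ large, so that every tail $w=[0;a_2,\dots,a_m]$ is tiny. Then for all $u$ with $|u|$ large, $1/(u+w)\in Q$ and the nearest Gaussian integer to $u+w$ is $u$, so $1/(u+w)\in H_m$. Hence $H_m\supset\{1/(u+w):|u|\ge C\}$. This set is a bi-Lipschitz image, on the relevant annuli, of the same configuration as above, and the same count gives $\N_r(H_m)\gtrsim r^{-1}$, so $\lowdimB H_m\ge1$.

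\emph{Upper bound.} I would argue by induction on $m$, aiming for $\N_r(G_m)\le C_m r^{-1}\log^{k_m}(1/r)$, which suffices for dimension $1$. For $m=1$, $G_1=\{1/u:|u|>1\}$. The points with $|u|\le r^{-1/2}$ number about $r^{-1}$, and the remaining points lie in the disc $B(0,r^{1/2})$, which is covered by about $(r^{1/2}/r)^2=r^{-1}$ balls of radius $r$. So $\N_r(G_1)\lesssim r^{-1}$.

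For the inductive step, write
\[
G_m=\bigcup_{u}T_u\bigl(G_{m-1}\cap\{v:|u+v|>1\}\bigr).
\]
Since $G_{m-1}\subset B(0,1)$, the map $T_u$ restricted to $\{|u+v|>1\}$ has derivative $|u+v|^{-2}$. The difficulty is small $|u|$: for $|u|\le 2$ the map $T_u$ can expand, because $|u+v|$ may be close to $1$. I would handle these branches by a dyadic decomposition in $\delta=|u+v|-1$.

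\begin{itemize}
\item For branches with $|u|\ge 3$, $T_u$ contracts by a factor of about $|u|^{-2}$ with bounded distortion. Thus
\[
\N_r\bigl(T_u(G_{m-1})\bigr)\lesssim \N_{r|u|^2}(G_{m-1})\lesssim (r|u|^2)^{-1}\log^{k}.
\]
Summing over $|u|\le r^{-1/2}$ gives about $r^{-1}\log(1/r)$. The branches with $|u|>r^{-1/2}$ land in $B(0,2r^{1/2})$, which costs $O(r^{-1})$.
\item For the finitely many $u$ with $|u|\le 2$, the points $v\in G_{m-1}$ with $|u+v|\in(1+2^{-j-1},1+2^{-j}]$ are mapped into the annulus $\{|z|\in[(1+2^{-j})^{-1},(1+2^{-j-1})^{-1})\}$, an annulus of width about $2^{-j}$. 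On it the derivative of $T_u$ is bounded above and below by constants, so these pieces cost $\lesssim \N_r(G_{m-1})$ each.
\item For $2^{-j}<r$, all the images lie in the annulus $\{1-r\le |z|<1\}$, which is covered by about $r^{-1}$ balls.
\end{itemize}

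Summing over the $O(\log(1/r))$ values of $j$ gives $\N_r(G_m)\lesssim r^{-1}\log^{k+1}(1/r)$, which completes the induction.

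I expect the main obstacle to be this near-unit-circle, expanding branch. Since the derivative of $T_u$ is in fact bounded on $|u+v|\ge1$ (it is at most $1$), the cleaner route is to observe that $T_u$ is $1$-Lipschitz there. Then $\N_r(T_u(A))\le \N_{r}(A)$ up to a constant, and no dyadic splitting is needed; only the summation over $u$ matters. I would try that first.
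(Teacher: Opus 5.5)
Your proposal is correct. The upper bound is essentially the paper's argument. You run an induction on $m$ with $\N_r(G_m)\lesssim r^{-1}\log^{m-1}(1/r)$ and split the branches at $|u|\approx r^{-1/2}$. For large $|u|$ you use the contraction $\lesssim |u|^{-2}$. For the finitely many small $u$ you use the exact identity $|T_u(v)-T_u(w)|=|v-w|/(|u+v||u+w|)\le |v-w|$ on $D_u$, so, as you suspected, the dyadic splitting is unnecessary. You then sum $\sum_{|u|\le r^{-1/2}}|u|^{-2}\asymp\log(1/r)$ and send the remaining branches into $B(0,2r^{1/2})$.

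Where you genuinely differ is the lower bound for $m\ge 2$. The paper transfers the bound from $H_1$ through the closure chain $H_1\subset\overline{H_2}\subset\overline{H_m}$. That requires an approximation argument plus an explicit check of the finitely many points of $H_1\cap\partial Q$ and $H_2\cap\partial Q$. You instead fix one tail $w=[0;Ni,\dots,Ni]\in H_{m-1}$ with $|w|\le 1/(N-1)$. For $|u|\ge 3$ you then have $w\in Q$, hence $[u+w]=u$, $\T(1/(u+w))=w$, and $1/(u+w)\in\intt Q$. So $\{1/(u+w):|u|\ge 3\}\subset H_m$, and the separation $|1/(u+w)-1/(u'+w)|\ge 1/((|u|+1)(|u'|+1))$ gives $\N_r(H_m)\gtrsim r^{-1}$ exactly as for $H_1$. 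This is shorter, fully checkable in a line, and avoids all boundary bookkeeping.

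The paper's closure lemmas buy something extra: the inclusion $H_2\subset\overline{H_m}$ is reused in Theorem~\ref{thm:content} to pass the infinite content of $H_2$ to every $H_m$. Your device adapts there as well: fix a small tail in $H_{m-2}$ and run the argument of Lemma~\ref{lem:content-2} on the resulting perturbed copy of $H_1$.

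One small correction: $H_1\not\supset\{1/u:|u|\ge 2\}$. Since $Q$ is half-open, $1/2\notin Q$ and $i/2\notin Q$, which is why $2$ and $-2i$ are excluded from $\A$. Take $|u|\ge 3$ instead, which costs nothing.
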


Theorem~\ref{thm:main} may be compared with the identity $\dimB C_m=1/2$ in the real setting. In both cases, the Minkowski dimension is independent of the expansion length. In the Gaussian setting, however, the critical Minkowski content reveals a sharp distinction between the first level and all higher levels.

\begin{thm}\label{thm:content}
We have
$$
\M^1(H_1)=\M^1(G_1)=4\pi\log(1+\sqrt{2}).
$$
For every $m\geq2$, we have
$$
\M^1(H_m)=\M^1(G_m)=\infty.
$$
\end{thm}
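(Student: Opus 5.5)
The plan is to compute $\M^1(G_1)$ directly from the local lattice structure of $G_1=\{1/u: u\in\ZI,\ |u|>1\}$, and then to deduce $\M^1(H_1)$ from it. For $H_m$ and $G_m$ with $m\ge 2$, I will exhibit about $\log(1/r)$ essentially disjoint rescaled copies of a lower level inside the $r$-neighbourhood, which forces the content to be infinite.

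\emph{Level one.}
\begin{enumerate}
\item First compare $H_1$ with $G_1$. We have $H_1=\{1/a: a\in\ZI,\ 1/a\in Q,\ [a]=a\}$. A point $1/u$ with $|u|\ge 2$ satisfies $|1/u|\le 1/2$, and it lies in $Q$ except possibly for finitely many boundary cases. Hence $G_1\setminus H_1$ is finite (indeed $H_1\subset G_1$ by Section~\ref{S2}). A finite set changes $\leb^2(\Omega_r)$ by $O(r^2)$, so it suffices to treat $G_1$.
\item Next describe $G_1$ near a point $z$ with $|z|$ small. Under $u\mapsto 1/u$, which is locally a similarity with ratio $|u|^{-2}=|z|^2$, the lattice $\ZI$ maps to an approximately square lattice (rotated) of spacing $s(z)=|z|^2$.
\item Let $\varphi(t)$ be the fraction of a unit square covered by disks of radius $t$ centred at the points of $\ZI$. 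Then $\varphi(t)=\pi t^2$ for $t\le 1/2$ and $\varphi(t)=1$ for $t\ge 1/\sqrt2$.
\item Split the plane into annuli $|z|\in[\rho,\rho(1+\eta)]$. On each annulus the local lattice is uniform up to a factor $1+O(\eta)$. This gives
$$\leb^2((G_1)_r)=(1+o(1))\int_{|z|<1}\varphi\big(r/|z|^2\big)\,d\leb^2(z)+O(r^{3/2}).$$
The error term accounts for the region $|z|\gtrsim 1$, where the points are $r$-separated and contribute $O(r^2)$ each, and for the errors of the lattice approximation.
\item Substitute $\rho=|z|$ and $t=r/\rho^2$:
$$\int\varphi(r/\rho^2)\,2\pi\rho\,d\rho=\pi r\int_0^{\infty}\frac{\varphi(t)}{t^2}\,dt+o(r).$$
\item Finally evaluate the integral. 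It equals $\tfrac12$ from $[0,1/2]$ (since $\varphi(t)/t^2=\pi$ there) and $\sqrt2$ from $[1/\sqrt2,\infty)$. On $[1/2,1/\sqrt2]$ we have $\varphi(t)=\pi t^2-4\big(t^2\arccos\frac{1}{2t}-\frac12\sqrt{t^2-\frac14}\big)$, and an elementary computation should give the total $4\log(1+\sqrt2)$. This yields $\M^1=4\pi\log(1+\sqrt2)$.
\end{enumerate}

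\emph{Main obstacle at level one.} The hardest part is making the lattice approximation in step 4 uniform. Near $|z|\approx\sqrt r$ the spacing is comparable to $r$, and the approximation error must be $o(r)$ after integration. I would control this by noting that on the annulus $|z|\sim\rho$ the Möbius map distorts distances at scale $\rho^2$ by a factor $1+O(\rho)$. This costs an error of relative size $O(\rho)$, and integrating against the measure above gives $o(r)$.

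\emph{Levels $m\ge 2$.} Since $H_m\subset G_m$, it suffices to show $\M^1(H_m)=\infty$.
\begin{enumerate}
\item Induction hypothesis: for each $k\ge1$ there are $\delta_k,c_k>0$ such that $\leb^2\big((H_k\cap B(0,\delta_k))_r\big)\ge c_k r$ for all small $r$. For $k=1$ this follows from the computation above restricted to a small disk.
\item Admissibility. Take $u\in K\ZI$ with $|u|$ large, where $K$ is a fixed integer. For $v\in H_{m-1}\cap B(0,\delta)$ we have $v\in Q$, so $[u+v]=u$ and $1/(u+v)\in Q$. Therefore $1/(u+v)\in H_m$.
\item Copies. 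The map $v\mapsto 1/(u+v)$ is, on $B(0,\delta)$, a similarity of ratio $|u|^{-2}$ up to a factor $1+O(\delta)$. The image lies in $B(1/u,2\delta|u|^{-2})$.
\item Disjointness. These disks are pairwise disjoint for distinct $u$ in a fixed dyadic shell $|u|\sim R$, because $|1/u-1/u'|\ge K/(|u||u'|)$ once $K\gg\delta$. Across different shells I keep only alternate shells, so the disks are disjoint there too.
\item Area of each copy. Each copy has $r$-neighbourhood area at least
$$(1-O(\delta))\,|u|^{-4}\,\leb^2\big((H_{m-1}\cap B(0,\delta))_{r|u|^2}\big)\ge c\,r|u|^{-2},$$
valid provided $r|u|^2$ is small.
\item Summation. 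Summing over $u$ with $|u|\le\epsilon r^{-1/2}$ gives
$$\leb^2((H_m)_r)\ge c'\,r\sum_{|u|\le\epsilon r^{-1/2}}|u|^{-2}\asymp r\log(1/r).$$
Hence $\M^1(H_m)=\M^1(G_m)=\infty$.
\end{enumerate}
The same construction, with the copies placed near $0$, also propagates the induction hypothesis to level $m$.
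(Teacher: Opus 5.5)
Your proposal is correct. At level one it is the paper's computation in continuum form; for $m\ge3$ it takes a different route.

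\textbf{Level one.} Your integral $\int\varphi(r/|z|^2)\,d\leb^2(z)$ is the continuum version of the paper's sum $\sum_u|u|^{-4}A(\vep|u|^2)$ over the inverted Voronoi cells $1/(u+Q)$. Note that $\varphi=A$, because $Q$ is the Voronoi cell of $0$.

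\begin{itemize}
\item The uniformity you flag as the main obstacle is settled in the paper cell by cell, in Lemma~\ref{lem:min-est-2}. There the inequality $|v-u-\xi|\ge|\xi|$ for all $v\in\ZI$ gives a \emph{global} lower bound on $\dist(1/(u+\xi),H_1)$. Your local-distortion estimate only sees lattice points near $u$. You still need the easy check that images of far lattice points lie at distance $\gg r$.
\item For the constant, Tonelli gives $\int_0^\infty\varphi(t)t^{-2}\,dt=\int_Q|\xi|^{-1}\,d\leb^2(\xi)=4\log(1+\sqrt2)$ directly, with no circular segments.
\item There is an arithmetic slip: the $[0,1/2]$ piece is $\pi/2$, not $\tfrac12$. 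Since $\varphi(t)\le\pi t^2$, the middle piece is at most $\pi(1/\sqrt2-1/2)<0.66$. With $\tfrac12$, the total could therefore not reach $4\log(1+\sqrt2)\approx3.53$.
\end{itemize}

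\textbf{Levels $m\ge2$.} For $m=2$ your construction is essentially Lemma~\ref{lem:content-2}. Alternate shells are unnecessary there: since $|u|,|u'|\le\epsilon r^{-1/2}$, the bound $|u+v-u'-v'|\ge K-2\delta$ already separates any two copies by more than $2r$ once $\epsilon$ is small. This is how the paper argues.

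The genuine difference is the passage to $m\ge3$.
\begin{itemize}
\item The paper does not induct. It uses $H_2\subset\overline{H_m}$ (Lemma~\ref{lem:closure-cover-H-2}) together with the invariance of tubular neighbourhoods under closure. That lemma was needed anyway for the dimension lower bound, but it requires casework on the boundary points $H_2\cap\partial Q$.
\item Your induction on a content bound localized near $0$ is self-contained and avoids that boundary casework. The point is that $1/(u+v)$ with $v\in Q$ small and $|u|$ large is automatically an admissible Hurwitz expansion.
\item Your route yields the $r\log(1/r)$ lower bound at every level directly. Its only extra input is the localized base case, which is immediate from level one because $H_1\setminus B(0,\delta_1)$ is finite.
\end{itemize}
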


Theorem~\ref{thm:content} shows that the critical Minkowski content distinguishes the first level from all higher levels, although the Minkowski dimension remains the same. 
In particular, the first level has a finite and explicitly computable critical Minkowski content, whereas the critical Minkowski content is infinite from the second level onwards. 
This distinction is not detected by Minkowski dimension alone and is reflected in the logarithmic accumulation that appears for all $m\geq2$.

\begin{rem}
Chen, Jiang and Wu \cite{CJW} determined the Minkowski dimensions of the corresponding real finite-level sets. A similar argument also yields $\M^{\frac{1}{2}}(C_1)=2\sqrt{2}$, and $\M^{\frac{1}{2}}(C_m)=\infty$ for every $m\geq2$.
\end{rem}

\subsection{Organisation of the paper}

The remainder of the paper is organized as follows. 
In Section~\ref{S2}, we establish the structural relations between $H_m$ and $G_m$, prove the covering estimates for $G_m$, and derive the lower and upper Minkowski dimension bounds in Theorem~\ref{thm:main}. 
In Section~\ref{S3}, we compute the critical Minkowski content of $H_1$ and $G_1$ and prove that the critical Minkowski contents of $H_m$ and $G_m$ are infinite for every $m\geq2$.

\section{The Minkowski dimension}\label{S2}
\subsection{Lower bounds for the Minkowski dimension}

We first establish structural relations between $H_m$ and $G_m$ needed for the lower bound in Theorem~\ref{thm:main}.
Throughout the remainder of the paper, we write 
$$\A := \{ u \in \ZI : |u|>1\} \setminus  \{ 2,-2i, 1-i,1+i,-1-i\}. $$ 
Therefore,
\begin{equation}\label{eq:h1}
	H_1 =\{ 1/u  \in Q : u \in \ZI  \} =\{ 1/u  \in Q : u \in \A  \}.
\end{equation}
By the definition of $G_1$, we have
\begin{equation}\label{eq:g1}
	G_1=\{ 1/u \in B(0,1) : u \in \ZI \} =\{ 1/u : u \in \ZI, |u|>1 \} .
\end{equation}

The first relation follows directly from the fact that every Hurwitz inverse branch satisfies the contraction condition used in the definition of $G_m$.

\begin{lem}\label{lem:subset-H-G}
For every $m\geq1$, we have $H_m\subset G_m$.
\end{lem}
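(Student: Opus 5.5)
We argue by induction on $m$, proving the slightly more precise statement that if $z\in H_m$ has Hurwitz digits $a_1,\dots,a_m$ and remainders $z_0=z,z_1,\dots,z_m=0$, then $z_j\in H_{m-j}$ for $0\le j\le m$. This shift property is immediate from the definitions: the orbit of $z_1=\T(z)$ under $\T$ is the tail of the orbit of $z$. It first vanishes at step $m-1$, and $z_1\in Q$ because $\T$ maps $Q$ into $Q$. Hence $z_1\in H_{m-1}$, with the convention $H_0=\{0\}$.

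The key geometric input is a single contraction fact: for $z\in Q\setminus\{0\}$ we have $|1/z|>1$. Indeed, every $z\in Q$ satisfies $|z|\le\sqrt{2}/2<1$, since $|\operatorname{Re}z|,|\operatorname{Im}z|\le 1/2$. So $|1/z|\ge\sqrt2>1$.

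For the induction, take $m\ge1$ and $z\in H_m$. Write $z=z_0=1/(a_1+z_1)$, with $a_1=[1/z]\in\ZI$ and $z_1=\T(z)\in H_{m-1}$ by the shift property. If $m=1$, then $z_1=0\in G_0$. If $m\ge2$, the induction hypothesis gives $z_1\in H_{m-1}\subset G_{m-1}$. In both cases $z_1\in G_{m-1}$. Moreover $a_1+z_1=1/z$, so $|a_1+z_1|=1/|z|>1$ by the contraction fact, since $z\neq0$. The definition of $G_m$ then gives $z=1/(a_1+z_1)\in G_m$, with $u=a_1$ and $v=z_1$.

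I expect no real obstacle. The only points needing care are bookkeeping. First, one must check that the remainder $z_1$ lies in $Q$ and has Hurwitz length exactly $m-1$, so that the induction hypothesis applies to $H_{m-1}$ and not merely to some Gaussian rational. Second, the base case must use $G_0=H_0=\{0\}$ consistently.
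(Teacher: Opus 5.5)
Your proof is correct and follows the paper's own argument: induction on $m$, writing $z=1/(a_1+z_1)$ with $z_1\in H_{m-1}$, and using $|z|<1$ on $Q$ to get $|a_1+z_1|>1$. The only difference is cosmetic. You absorb the base case $m=1$ into the inductive step via $z_1=0\in G_0$, whereas the paper reads it off directly from the explicit descriptions of $H_1$ and $G_1$ as reciprocals of Gaussian integers.
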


\begin{proof}
We prove the lemma by induction.
The case $m=1$ follows from \eqref{eq:h1} and \eqref{eq:g1}.
Assume that the lemma holds for $m=k-1$.
Let $z\in H_k$, and
$$
	z=\frac{1}{a_1+z_1},
$$
where $a_1 \in \ZI$ and $z_1 \in Q$.
Then we have $z_1 \in B(0,1)$ and $z_1 \in H_{k-1}$.
By the inductive hypothesis, $z_1 \in G_{k-1}$.
Since $z \in B(0,1)$, we have $|z_1+a_1|>1$.
Thus $z=(a_1+z_1)^{-1} \in G_k$.
\end{proof}

\begin{lem}\label{lem:closure-cover-H-1}
We have $H_1\subset\overline{H_2}.$
\end{lem}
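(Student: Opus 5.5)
Fix $z=1/u\in H_1$ with $u\in\A$, so $z\in Q$ and $\T(z)=0$, i.e.\ $[1/z]=u$. We will exhibit a sequence $z^{(n)}\in H_2$ with $z^{(n)}\to z$. The natural candidates are
$$
z^{(n)}=\frac{1}{u+1/w_n},
$$
where $w_n\in\ZI$ has $|w_n|\to\infty$ and $1/w_n\in H_1$ (that is, $w_n\in\A$ and $1/w_n\in Q$). Then $1/z^{(n)}=u+1/w_n\to u$, so $z^{(n)}\to 1/u=z$. The task is to choose $w_n$ so that the Hurwitz algorithm applied to $z^{(n)}$ reads off exactly $a_1=u$ and $a_2=w_n$.

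This requires two admissibility conditions. The first is $z^{(n)}\in Q$. The second is $[u+1/w_n]=u$, i.e.\ $1/w_n\in Q$. Under these, $\T(z^{(n)})=1/w_n$, and since $1/w_n\in H_1$ we get $\T^2(z^{(n)})=0$ with both iterates nonzero, so $z^{(n)}\in H_2$. The second condition is easy: for $|w|$ large, $1/w$ lies near $0$, which is interior to $Q$, so $1/w_n\in Q$ and $[w_n]=w_n$.

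The main obstacle is the first condition, because $z$ itself may lie on the boundary of $Q$. For example $u=-2$ gives $z=-1/2$, which lies on the edge $\operatorname{Re}z=-1/2$ (note $2\notin\A$ because $1/2\notin Q$). Perturbations of $z$ in an arbitrary direction may leave $Q$, so we must control the direction of $z^{(n)}-z$. Expanding,
$$
z^{(n)}-z\approx -\frac{1}{u^2w_n}.
$$
Choosing $w_n=n\,c$ with $c\in\ZI$ fixed, this displacement points in the direction $-1/(u^2c)$ up to $O(n^{-2})$. It therefore suffices to choose $c$ so that this direction points strictly into the interior of $Q$ from $z$. Since $Q$ is a square, the set of inward directions at any boundary point, including a corner, contains an open cone. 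The directions $\arg(-1/(u^2c))$ with $c\in\ZI$ are dense on the circle, so such a $c$ exists. A strict inward direction with a first-order displacement of size $\asymp 1/n$ dominates the $O(n^{-2})$ error, hence $z^{(n)}\in Q$ for all large $n$; if $z$ is an interior point, nothing needs checking. For such $n$, $w_n=nc$ has large modulus, so $|w_n|>1$, $w_n$ avoids the five excluded values, and $1/w_n\in Q$. Thus $w_n\in\A$.

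It remains to note that $z^{(n)}\neq 0$ and $\T(z^{(n)})=1/w_n\neq0$, which is immediate. Therefore $z^{(n)}\in H_2$ and $z^{(n)}\to z$, so $z\in\overline{H_2}$. Since $z\in H_1$ was arbitrary, $H_1\subset\overline{H_2}$.
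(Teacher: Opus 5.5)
Your proof is correct and follows essentially the paper's route: you approximate $1/u$ by $1/(u+1/w_n)$ with $w_n$ a growing multiple of a fixed Gaussian integer, and the only real issue is keeping these points in $Q$ at the boundary points $-1/2$, $-i/2$, $(-1-i)/2$ of $H_1$. The paper instead uses the explicit choices $w_n=n$ (for $|u|\geq\sqrt5$) and $w_n=-n$ (for $u\in\{-2,2i,-1+i\}$) and verifies them directly; at $-i/2$ and $(-1-i)/2$ that choice is only tangent to an edge at first order and relies on the second-order term, whereas your first-order expansion together with the density of Gaussian-integer directions avoids the casework and would pick strictly inward multipliers there (e.g.\ $c=-i$ and $c=-1-i$).
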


\begin{proof}

For any $v \in \ZI \setminus \{ 0 \}$, define $\phi_v : (-1/2,1/2) \to \C$ by
$$
	\phi_v(t)=\frac{1}{v+t}.
$$
Observe that
$$
	H_1 \cap\partial Q = \Big\{ - \frac{1}{2}, -\frac{i}{2}, \frac{-1-i}{2}  \Big\}
	=  \Big\{ \frac{1}{-2}, \frac{1}{2i}, \frac{1}{-1+i} \Big\}.
$$
For every $ u \in \A \setminus \{-2,2i,-1+i\} $, we have $ |u| >2$. Actually, we have $|u| \geq \sqrt{5} $.
In this case, we conclude that $\phi_u((0,1/2)) \subset Q$.
Hence $1/(u+1/n) \in H_2$ for all $n \geq 3$.
Since
$$
	\lim_{n \to \infty} \frac{1}{u+1/n}=\frac{1}{u},
$$
we obtain that $1/u \in\overline{H_2}$.

For $v \in \{-2, 2i,-1+i\}$, one checks that $\phi_v((-1/2,0)) \subset Q$.
Similarly, $1/(v-1/n) \in H_2$ for all $n \geq 3$, and hence $ 1/v  \in\overline{H_2}$.
Thus, we have $ H_1 \subset \overline{H_2} $.
\end{proof}

\begin{lem}\label{lem:closure-cover-H-2}
For every $m\geq 2$, we have $\overline{H_2} \subset\overline{H_m}$.
\end{lem}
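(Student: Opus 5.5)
Because closures are nested and transitive, it suffices to show $H_k\subset\overline{H_{k+1}}$ for every $k\geq 2$. Then $\overline{H_k}\subset\overline{H_{k+1}}$ for all $k\geq2$, and induction on $m$ gives $\overline{H_2}\subset\overline{H_m}$.

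Fix $k\geq 2$ and $z\in H_k$ with Hurwitz expansion $z=[0;a_1,\dots,a_k]$. For $n\geq 3$, consider the perturbed points
$$
w_n=\cfrac{1}{a_1+\cfrac{1}{\ddots+\cfrac{1}{a_k+\varepsilon_n}}},\qquad \varepsilon_n=\pm\frac1n,
$$
obtained by replacing the last partial quotient $a_k$ with $a_k+\varepsilon_n$. The sign of $\varepsilon_n$ is chosen according to the last digit $a_k$, exactly as in Lemma~\ref{lem:closure-cover-H-1}. Continuity of the Möbius maps $T_{a_1}\circ\cdots\circ T_{a_{k-1}}$ at the point $1/a_k$ gives $w_n\to z$. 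The remaining task is to show $w_n\in H_{k+1}$ for all large $n$, with digits $a_1,\dots,a_k,\pm n$.

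For the last step, note that $z_{k-1}=1/a_k\in H_1$. Lemma~\ref{lem:closure-cover-H-1} gives exactly that $1/(a_k+\varepsilon_n)\in Q$, so it lies in $H_2$ with digits $a_k,\pm n$; here $\pm n$ is admissible since $\pm 1/n\in Q$ for $n\geq3$. For the earlier steps $j<k$, the tail $z_j$ of $z$ satisfies $z_j\in Q$, and the perturbed tail $z_j^{(n)}=T_{a_{j+1}}\circ\cdots(\cdot)$ converges to $z_j$. We need $z_j^{(n)}\in Q$, so that the Hurwitz algorithm applied to $w_n$ picks the same digits $a_1,\dots,a_{k-1}$. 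If $z_j$ lies in the interior of $Q$, this holds for large $n$ by openness.

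The main obstacle is the boundary case, when some tail $z_j$ with $1\leq j\leq k-1$ lies on the closed left or bottom edges of $Q$. A small perturbation may then push $z_j^{(n)}$ outside $Q$, and the digit $a_j$ would change. To handle this, I would use the freedom in choosing the perturbation direction. I would replace $\pm1/n$ by $\varepsilon_n=\zeta/n$, with $\zeta$ chosen from the four directions $\{\pm1,\pm i\}$ (or a generic unit vector). The derivative of $\varepsilon\mapsto z_j^{(n)}$ at $\varepsilon=0$ is nonzero, since it is a composition of Möbius maps. Hence the image of a small sector of directions at $1/a_k$ is a small sector at $z_j$ with the same angle, rotated. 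A point of $\partial Q$ that lies in $Q$ has $Q$ locally containing at least a half-plane sector (or a quadrant at the corner $(-1-i)/2$). Staying in $Q$ therefore imposes on $\zeta$ one open-half-plane condition per boundary index $j$, together with the condition from the last step. I would need these finitely many conditions to be simultaneously satisfiable. The first thing I would try is to check that the boundary constraints at different levels are compatible: each $z_j$ on the left or bottom edge forces $z_{j-1}=1/(a_j+z_j)$ into a specific configuration, and the Möbius map $v\mapsto 1/(a+v)$ reverses orientation angles in a controlled way. If a direct sector argument fails, I would instead perturb at an earlier level, choosing a deeper tail to perturb.
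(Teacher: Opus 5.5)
There is a genuine gap, and it sits exactly at the step you call the main obstacle. The direction-choosing argument you sketch cannot close it. Your reduction to $H_k\subset\overline{H_{k+1}}$ for all $k\ge 2$ is logically valid but stronger than what is needed. It forces you to treat points of $H_k$ with several tails on $\partial Q$, and for those you only hope that the half-plane conditions on the perturbation are compatible. They can be exactly opposite.

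Take $z=-\frac12+\frac{i}{8}$. One checks that $1/z=-2+z_1$ with $z_1=\frac{2-8i}{17}\in\intt Q$, and that $1/z_1=\frac12+2i=(1+2i)+z_2$ with $z_2=-\frac12$. So $z=[0;-2,1+2i,-2]\in H_3$, and both $z_0$ and $z_2$ lie on the left edge. For $b\in\ZI\setminus\{0\}$ set $t_2=1/(-2+1/b)$, $t_1=1/(1+2i+t_2)$ and $t_0=1/(-2+t_1)$. Then:
\begin{itemize}
\item $\operatorname{Re}t_2\ge-\frac12$ holds iff $|1-b|\ge|b|$, i.e.\ iff $\operatorname{Re}b\le\frac12$;
\item $\operatorname{Re}t_0\ge-\frac12$ holds iff $|t_1-1|\ge 1$, iff $\operatorname{Re}(1/t_1)\le\frac12$, iff $\operatorname{Re}t_2\le-\frac12$, i.e.\ iff $\operatorname{Re}b\ge\frac12$.
\end{itemize}
Hence no Gaussian integer $b$ whatsoever makes $[0;-2,1+2i,-2,b]$ a Hurwitz expansion. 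To first order, the displacement of $t_0$ is $(z_0z_1)^2=-\frac1{16}$ times that of $t_2$, so the two half-planes are exactly opposite.

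The point $z$ does lie in $\overline{H_4}$, but only through expansions with different digits, e.g.\ $[0;-2,2i,2,b']$ with $\operatorname{Re}b'\ge 0$. A scheme that keeps $a_1,\dots,a_k$ never produces these, and your fallback ``perturb at an earlier level'' is too vague to cover this case. A smaller point: $\zeta/n$ with a generic unit $\zeta$ is not the reciprocal of a Gaussian integer. You would need $\varepsilon_n=1/(n\beta)$ with $\beta\in\ZI$.

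The paper never proves $H_k\subset\overline{H_{k+1}}$. It keeps $H_2$ as the fixed source set and inducts on $m$. For $z=1/(u+1/v)\in H_2\cap\intt Q$, it approximates the tail $1/v\in H_1\subset\overline{H_2}\subset\overline{H_{m-1}}$ (Lemma~\ref{lem:closure-cover-H-1} plus the induction hypothesis) by points $w_n\in H_{m-1}$. Since $w_n$ already lies in $Q$ with a genuine Hurwitz expansion, the digit $u$ and all deeper digits are automatically admissible. The only remaining condition, $1/(u+w_n)\in Q$, holds for large $n$ because $z$ is interior. Only the finite set $H_2\cap\partial Q$ needs hand-built sequences, which use $c_n=1/(n-ni+\eta)\in H_{m-2}$ approaching along a diagonal direction. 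To repair your argument, adopt this structure: approximate tails by points already known to lie in the right level set, instead of perturbing the last digit of an arbitrary element of $H_k$.
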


\begin{proof}
The case $m=2$ is immediate.
Fix $m \geq 3$ and assume inductively that the lemma holds for every $2 \leq m_0 < m$. 
Let $z\in H_2$, and write its Hurwitz continued fraction expansion as
$$
	z=\frac{1}{u+\frac{1}{v}},
$$
where $u\in\ZI$ and $v\in\A$.

Suppose that $z\notin\partial Q.$
Since $z\in Q$, we have $z\in\intt Q$. Define
$$
	T_u(w)=\frac{1}{u+w}.
$$
By continuity, there exists a neighbourhood $\Omega$ of $1/v$ such that
$
	T_u(\Omega)\subset Q.
$
By Lemma~\ref{lem:closure-cover-H-1} and the inductive hypothesis,
$$
	1/ v \in\overline{H_2}\subset\overline{H_{m-1}}.
$$
Choose a sequence $\{w_n\}$ contained in $\Omega \cap H_{m-1}$ such that
$\lim_{n \to \infty} w_n=1/v.$
Therefore
$
	{1}/{(u+w_n)}\in H_m
$
and $\lim_{n \to \infty} 1/(u+ w_n)=z$.
Therefore, $z\in\overline{H_m}.$

It remains to consider the finite set $H_2\cap\partial Q$. 
A direct calculation gives
$$
	H_2\cap\partial Q=E_+\cup E_-,
$$
where
$$
	E_+=\Big\{\frac{-2+i}{4},\frac{-3-i}{6},\frac{-2-i}{4},\frac{-1-2i}{4},\frac{-1-3i}{6},\frac{1-2i}{4}\Big\},
	\qquad
	E_-=\Big\{\frac{-3+i}{6},\frac{1-3i}{6}\Big\}.
$$

To continue the proof, we construct a suitable sequence.
If $m=3$, set $\eta=0$. If $m\geq 4$, fix $\eta\in H_{m-3}$ such that $|\eta|<1/3$.
For every $n \in \Z$, we write
$$
	c_n= \frac{1}{n-ni +\eta}.
$$
It follows that $c_n \in H_{m-2}$ for all $|n| \geq 2$.

If $z \in E_+$, it is straightforward to check that
$$
	w_n := \frac{1}{v+c_n} \in Q 
$$
for all $n \geq 3$. Moreover, for all $n \geq 3$, we have
$$
	\frac{1}{u+w_n} \in Q.
$$
Since $c_n\in H_{m-2}$, it follows that $1/(u+w_n) \in H_m$. 
Moreover,
$$\lim_{n \to \infty} \frac{1 }{u+w_n}=\frac{1 }{u+\lim_{n \to \infty} w_n}=\frac{1}{u+1/v}  =z . $$
Therefore, $z \in \overline{H_m}$.

Similarly, if $z \in E_-$ and $n \geq 3$, then
$$
	w_n := \frac{1}{v+c_{-n}} \in Q , \quad \mbox{and} \quad
	\frac{1}{u+w_n} \in Q.
$$
It follows that $1/(u+w_n)\in H_m$ and hence $z\in\overline{H_m}$.

We have therefore proved that $H_2 \subset  \overline{H_m}$.
Since $\overline{H_m}$ is closed, it follows that $\overline{H_2} \subset \overline{H_m}$.
\end{proof}

Having established the required closure relations, we now derive the lower bound from a separated subset of $H_1$.

\begin{lem}\label{lem:low-dim-1}
We have $ \lowdimB H_1 \geq 1$.
\end{lem}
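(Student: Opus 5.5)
To prove $\lowdimB H_1 \geq 1$, we exhibit, for every small $r>0$, roughly $c/r$ points of $H_1$ that are pairwise at distance more than $2r$. Any closed ball of radius $r$ contains at most one of them, so $\N_r(H_1)\geq c/r$. Taking logarithms and letting $r\to0^+$ then gives the claim.

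The natural family comes from Gaussian integers on an annulus. By \eqref{eq:h1}, $1/u\in H_1$ for every $u\in\A$, and $\A$ contains all $u\in\ZI$ with $|u|\geq\sqrt5$. For $|u|\ge 2$ we have $|1/u|\le 1/2$. This does not quite guarantee $1/u\in Q$ on the boundary, but membership in $Q$ is exactly encoded by $\A$, so we simply use $u\in\ZI$ with $|u|\geq 3$. Then $|1/u|\leq 1/3$ and $1/u\in\intt Q$.

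Fix $R\geq 3$ large and take $S_R=\{u\in\ZI: R\le |u|<2R\}$. The lattice-point count gives $\card S_R\geq cR^2$ for an absolute $c>0$ when $R$ is large. For $u\neq u'$ in $S_R$,
$$\Big|\frac1u-\frac1{u'}\Big|=\frac{|u-u'|}{|u||u'|}\geq \frac{1}{4R^2}.$$
Choosing $r=1/(9R^2)$, the points $\{1/u:u\in S_R\}$ are $2r$-separated, so
$$\N_r(H_1)\geq cR^2\asymp r^{-1}.$$

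For general small $r$, pick $R$ with $1/(9R^2)$ comparable to $r$, for instance $R=(9r)^{-1/2}$. Then $\N_r(H_1)\ge c' r^{-1}$, and hence
$$\varliminf_{r\to0^+}\frac{\log \N_r(H_1)}{-\log r}\geq 1.$$

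The only mildly delicate step is the uniform separation bound. Its worst case is pairs of nearest neighbours at the largest modulus, where the separation is $1/(4R^2)$. This is handled by the displayed estimate, so there is no real obstacle. A variant restricting to $u$ on a line, such as $u=n$, would only give $\sim r^{-1/2}$ points, since the images $1/n$ accumulate at $0$. The two-dimensional lattice count is what yields dimension $1$: the inversion maps the annulus of area $\asymp R^2$ onto a region near $0$ of scale $R^{-1}$ with spacing $R^{-2}$.
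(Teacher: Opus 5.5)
Your proof is correct and takes essentially the same route as the paper. The paper uses $u=a+bi$ with $2\le a,b\le 2R$ and $R=1/(5\sqrt{\delta})$, obtaining at least $R^2$ points $1/u\in H_1$ that are $2\delta$-separated via $|1/u-1/u'|=|u-u'|/(|u||u'|)$, which is exactly your argument with a square of lattice points in place of your annulus $R\le|u|<2R$.
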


\begin{proof}
For any $0< \delta < 0.01$, set $R = 1 /(5\sqrt{\delta})$ and let
$$
	\Omega:= \{a+bi: 2 \leq a, b\leq 2R,\ a,b\in\Z\}.
$$
It is clear that $\Omega \subset \ZI$ and the cardinality of $\Omega$ satisfies $\card \Omega \geq R^2$.

For any distinct $z,z'\in \Omega$, we have $|z-z'|\geq 1 $ and $ |z|, |z'| <3 R  $, so that
$$
	\Big|\frac{1}{z}-\frac{1}{z'}\Big|  =\frac{|z-z'|}{|z||z'|} > \frac{1}{9R^2} > 2 \delta.
$$
Thus the points in $\{1/z:z\in \Omega \}$ are $2\delta$-separated.
Hence,
$$
	\N_\delta(H_1) \geq \card \Omega \geq R^2 \geq  \delta^{-1} /25.
$$
Thus,
$$
	\lowdimB H_1 \geq \varliminf_{\delta \to 0^+} \frac{ \log \N_\delta(H_1)}{ - \log \delta}
	\geq \varliminf_{\delta \to 0^+} \frac{ \log (\delta^{-1}/25)}{ - \log \delta}=1 .
$$
\end{proof}

\begin{lem}\label{lem:low-dim}
For every $m\geq 1$, we have $\lowdimB G_m\geq \lowdimB H_m \geq 1$.
\end{lem}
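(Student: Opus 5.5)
The inequality $\lowdimB G_m\geq\lowdimB H_m$ is immediate from Lemma~\ref{lem:subset-H-G}. Since $H_m\subset G_m$, every cover of $G_m$ by closed $r$-balls also covers $H_m$. Hence $\N_r(H_m)\leq\N_r(G_m)$ for every $r>0$, and taking $\varliminf$ of $\log\N_r(\cdot)/(-\log r)$ gives the first inequality. So the work lies in showing $\lowdimB H_m\geq 1$.

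For $m=1$ this is exactly Lemma~\ref{lem:low-dim-1}. For $m\geq 2$, I combine the closure relations. Lemma~\ref{lem:closure-cover-H-1} and Lemma~\ref{lem:closure-cover-H-2} give
$$H_1\subset\overline{H_2}\subset\overline{H_m}.$$
I then use the standard fact that the covering numbers of a bounded set and of its closure are comparable. If $\overline{H_m}$ is covered by $N$ closed balls of radius $r$, these balls also cover $H_m$, so $\N_r(H_m)\geq\N_r(\overline{H_m})$ fails only in the trivial direction. The direction I actually need is the reverse. Given any cover of $H_m$ by $N$ closed balls $\overline{B}(x_j,r)$, the union is closed and contains $H_m$. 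It therefore contains $\overline{H_m}$, which gives $\N_r(\overline{H_m})\leq\N_r(H_m)$.

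Consequently,
$$\N_r(H_1)\leq\N_r(\overline{H_m})\leq\N_r(H_m)\quad\text{for all } r>0,$$
where the first inequality holds because $H_1\subset\overline{H_m}$. Taking lower limits and applying Lemma~\ref{lem:low-dim-1} yields $\lowdimB H_m\geq\lowdimB H_1\geq 1$.

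There is no real obstacle here; the substantive work was done in the closure lemmas. The only point needing care is that closed balls are used in the definition of $\N_r$. This is what makes a finite union of covering balls closed, so that it covers the closure at no extra cost. Had open balls been used, I would instead replace $r$ by $2r$, which does not affect the dimension.
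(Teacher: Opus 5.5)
Your proof is correct and follows the paper's argument. You use monotonicity under $H_m\subset G_m$ (Lemma~\ref{lem:subset-H-G}), the chain $H_1\subset\overline{H_2}\subset\overline{H_m}$ from the two closure lemmas, and invariance of the lower Minkowski dimension under closure; the paper asserts this last fact, and you justify it by noting that a finite union of closed balls is closed.

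Your sentence about the ``trivial direction'' is muddled: covering $\overline{H_m}$ gives $\N_r(H_m)\le\N_r(\overline{H_m})$. The inequality you actually need, $\N_r(\overline{H_m})\le\N_r(H_m)$, is proved correctly.
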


\begin{proof}
For $m \geq 2$, Lemmas~\ref{lem:closure-cover-H-1} and~\ref{lem:closure-cover-H-2} give $H_1 \subset \overline{H_2} \subset\overline{H_m}$.
Therefore, by Lemma~\ref{lem:low-dim-1},
$$
\lowdimB H_m = \lowdimB\overline{H_m}  \geq \lowdimB H_1 \geq 1.
$$
On the other hand,
Lemma~\ref{lem:subset-H-G} gives that $H_{m} \subset G_m $.
Therefore, for every $m \in \Z^+$,
$$
	\lowdimB G_m  \geq \lowdimB H_{m} \geq 1 .
$$
This completes the proof.
\end{proof}

\subsection{Upper bounds for the Minkowski dimension}

We turn to the upper bound, using the recursive structure of $G_m$ and the contraction of the inverse branches $T_u$.
For each $u \in \ZI \setminus\{ 0 \}$, define
$$
	T_u(z)=\frac{1}{u+z}, \quad z \in B(0,1) .
$$
Let $D_u := \{ z \in B(0,1): |u+z|>1 \}$.
Since $|0+v|<1$ holds for all $m \in \Z^+$ and $v\in G_m$, the term $u=0$ does not occur in the recursion. 
Hence we give a recursive definition of $G_m$: for all $m \in \Z^+$,
$$
	G_{m+1}=\bigcup_{u\in\ZI \setminus\{0\} } T_u( G_m \cap D_u ).
$$
We first record some basic properties of $T_u$.

\begin{lem}\label{lem:tu-lip}
For every $u \in \ZI\setminus\{0\}$, the map $T_u$ is $1$-Lipschitz on $D_{u}$, namely,
$$
	|T_u(v)-T_u(w)| \leq |v-w|, \qquad  \forall v,w \in D_{u}.
$$
Moreover, if $|u| \geq 2$, $T_u$ is bi-Lipschitz on $B(0,1)$.
More precisely, for any distinct $v,w\in B(0,1)$,
$$
	\frac{1}{4|u|^2} 	\leq \frac{ |T_u(v)-T_u(w)|}{|v-w|}  \leq \frac{4}{|u|^2}.
$$
\end{lem}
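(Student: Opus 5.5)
The plan is to start from the algebraic identity
$$
T_u(v)-T_u(w)=\frac{1}{u+v}-\frac{1}{u+w}=\frac{w-v}{(u+v)(u+w)},
$$
which gives
$$
\frac{|T_u(v)-T_u(w)|}{|v-w|}=\frac{1}{|u+v|\,|u+w|}
$$
for distinct $v,w$ at which $T_u$ is defined. Both assertions of Lemma~\ref{lem:tu-lip} then reduce to bounding $|u+v|$ and $|u+w|$ from above and below, so no further structure of $G_m$ is needed.

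For the first claim, take $v,w\in D_u$. By the definition of $D_u$ we have $|u+v|>1$ and $|u+w|>1$, so the product in the denominator exceeds $1$. Hence $|T_u(v)-T_u(w)|<|v-w|$ for $v\neq w$, and the case $v=w$ is trivial. This gives the $1$-Lipschitz property on $D_u$.

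For the bi-Lipschitz claim, suppose $|u|\geq2$ and $v,w\in B(0,1)$. The triangle inequality gives
$$
|u|-1\leq|u+v|\leq|u|+1,
$$
and the same bounds hold for $w$. Since $|u|\geq2$, we have $|u|-1\geq|u|/2>0$. In particular, $B(0,1)\subset D_u$, so $T_u$ is well defined on all of $B(0,1)$. We also have $|u|+1\leq 3|u|/2\leq 2|u|$.

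Combining these estimates yields
$$
\frac{|u|^2}{4}\leq|u+v|\,|u+w|\leq 4|u|^2.
$$
Inverting gives exactly the stated two-sided bound $\frac{1}{4|u|^2}\leq\frac{|T_u(v)-T_u(w)|}{|v-w|}\leq\frac{4}{|u|^2}$.

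There is no real obstacle here. The only point requiring care is checking that $|u|\geq2$ keeps $|u+v|$ away from zero uniformly on $B(0,1)$. This ensures that $T_u$ is defined there and that the lower bound $|u|-1\geq|u|/2$ is valid.
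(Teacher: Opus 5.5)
Your proposal is correct and follows essentially the same route as the paper. Both use the identity $|T_u(v)-T_u(w)|=|v-w|/(|u+v||u+w|)$, conclude $1$-Lipschitz from $|u+v|,|u+w|>1$ on $D_u$, and get the bi-Lipschitz bounds from $(|u|-1)^2\geq |u|^2/4$ and $(|u|+1)^2\leq 4|u|^2$ when $|u|\geq 2$.
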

\begin{proof}
For any $u \in \ZI \setminus \{ 0 \}$ and $v,w \in D_{u}$, we have $|u+v| \geq 1$ and $|u+w |\geq 1$.
Therefore
$$
	\big|T_u(v)-T_u(w) \big|
	= \Big| \frac{1}{u+v} -\frac{1}{u+w} \Big|
	=\frac{|v-w|}{|u+v||u+w|} \leq |v-w|.
$$
If $|u| \geq 2 $, then for any distinct $v,w \in B(0,1)$,
$$
	\frac{\big|T_u(v)-T_u(w) \big|}{|v-w|}
	=\frac{1}{|u+v||u+w|}
	\leq \frac{ 1 }{ (|u|-1)^2  } \leq \frac{4  }{|u|^2}.
$$
Similarly, the lower bound follows from $|u+v||u+w| \leq (|u|+1)^2 \leq 4 |u|^2$.
\end{proof}

\begin{lem}\label{lem:cover-upper}
For every $m\geq 1$, there exists $\xi_m>0$ such that, for all $0<\delta<1/4$,
$$
	\N_\delta(G_m)\leq \xi_m \delta^{-1}(\log\delta^{-1})^{m-1}.
$$
\end{lem}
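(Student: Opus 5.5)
We argue by induction on $m$, using the recursive decomposition $G_{m+1}=\bigcup_{u\neq0}T_u(G_m\cap D_u)$ and splitting the branches according to the size of $|u|$ relative to $\delta$.

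\emph{Base case $m=1$.} By \eqref{eq:g1}, $G_1=\{1/u: u\in\ZI,\ |u|>1\}$. Fix $0<\delta<1/4$.
\begin{itemize}
\item[(i)] Points with $|u|\geq \delta^{-1/2}$ lie in the disc $B(0,\delta^{1/2})$. Its $\delta$-neighbourhood is covered by $O(\delta^{-1})$ balls of radius $\delta$, using a grid of mesh $\delta$ on a disc of area $\asymp\delta$.
\item[(ii)] Points with $1<|u|<\delta^{-1/2}$ number $O(\delta^{-1})$ by a lattice point count, and each is covered by one ball.
\end{itemize}
Hence $\N_\delta(G_1)\leq \xi_1\delta^{-1}$.

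\emph{Inductive step.} Assume the bound for $G_m$ and fix $\delta$. We split $G_{m+1}$ into three groups of branches.

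\textbf{Large $|u|$.} Take $|u|\geq \delta^{-1/2}$. Every point $T_u(v)$ has modulus at most $1/(|u|-1)\leq 2\delta^{1/2}$. The union of these branches therefore lies in $B(0,2\delta^{1/2})$, which costs $O(\delta^{-1})$ balls.

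\textbf{Medium $|u|$.} Take $2\leq |u|<\delta^{-1/2}$. By Lemma~\ref{lem:tu-lip}, $T_u$ is $4|u|^{-2}$-Lipschitz on $B(0,1)$. A cover of $G_m$ by balls of radius $\delta|u|^2/4$ is mapped onto a cover of $T_u(G_m\cap D_u)$ by sets of diameter at most $2\delta$, each contained in a ball of radius $2\delta$. Doubling costs only a constant factor.

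We need $\delta|u|^2/4<1/4$ to apply the hypothesis. If instead $\delta|u|^2/4\ge 1/4$, a single ball handles $G_m$ anyway, since it lies in the unit disc. By the inductive hypothesis this branch costs
\[
\xi_m\,\frac{4}{\delta|u|^2}\Big(\log\frac{4}{\delta|u|^2}\Big)^{m-1}\lesssim \frac{1}{\delta|u|^2}(\log\delta^{-1})^{m-1}.
\]
Summing over $2\leq|u|<\delta^{-1/2}$ with the estimate
\[
\sum_{2\le|u|<\delta^{-1/2}}|u|^{-2}\asymp\int_2^{\delta^{-1/2}}\frac{2\pi r\,dr}{r^2}\asymp\log\delta^{-1}
\]
gives $O(\delta^{-1}(\log\delta^{-1})^{m})$. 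This is where the extra logarithm appears.

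\textbf{Small $|u|$.} Take $|u|<2$, i.e.\ finitely many branches, $u\in\{\pm1,\pm i,\pm1\pm i\}$. Lemma~\ref{lem:tu-lip} only gives that $T_u$ is $1$-Lipschitz on $D_u$. So $T_u(G_m\cap D_u)$ is covered by the images of a $\delta$-cover of $G_m$, costing $\xi_m\delta^{-1}(\log\delta^{-1})^{m-1}$ per branch. This is dominated by the medium-range total.

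Combining the three groups gives $\xi_{m+1}$. Note that $\N_\delta$ should be taken for covers of subsets, which only decreases the count, so restricting to $G_m\cap D_u$ is harmless.

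The main obstacle is bookkeeping the covering numbers at the variable scale $\delta|u|^2$. Two points need care:
\begin{itemize}
\item The hypothesis is stated only for scales below $1/4$, which is why the boundary regime uses trivial covers.
\item Lipschitz images of balls are not balls, so each ball must be replaced by one of comparable radius, absorbing a constant.
\end{itemize}
The logarithmic sum over the medium range is then the decisive computation.
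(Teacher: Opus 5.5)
Your proposal is correct and follows essentially the paper's argument: induction on $m$, with the branches $|u|\geq\delta^{-1/2}$ absorbed into a disc of radius $2\delta^{1/2}$, the remaining branches handled by the Lipschitz bounds of Lemma~\ref{lem:tu-lip} with the inductive hypothesis applied at scale $\delta|u|^2/4$, and the lattice sum $\sum|u|^{-2}\lesssim\log\delta^{-1}$ producing the extra logarithm. The paper merges your small and medium ranges into one by setting $\lambda_u=\min\{1,4|u|^{-2}\}$, and it silently ignores the constant-factor loss from Lipschitz images of balls that you correctly account for.
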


\begin{proof}
Fix $0<\delta<1/4$ and write $R_\delta=\delta^{-1/2}$.
We prove the estimate by induction on $m$.
For $m=1$, we have
$
	G_1=\{1/u: u\in\ZI,\ |u|>1\} .
$
Note that
$$ \N_\delta\big( \big\{1/ u : u\in\ZI , 1<|u| \leq R_\delta \big\}  \big)
\leq \card  \big\{ u\in\ZI : |u| \leq R_\delta \big\} \leq 5 {R_\delta}^2 
.$$
On the other hand,
$$
	\N_\delta \big( \{1/u: u\in\ZI,\ |u|>R_\delta \} \big) \leq
	 \N_\delta \big( B(0,\delta^{1/2}) \big) \leq 5 \delta^{-1}.
$$
Thus $\N_\delta(G_1)\leq 10 \delta^{-1}$ and the induction hypothesis holds for $m=1$.

Assume the lemma holds for $G_k$.
Set $\lambda_u = \min \{ 1, 4|u|^{-2} \}$ for all $u \in \ZI\setminus\{0\}$.
Since $G_k \subset B(0,1)$, Lemma~\ref{lem:tu-lip} implies that $T_u$ is $\lambda_u$-Lipschitz on $D_u \cap G_k $.
If $0<|u|< R_\delta$, we have $\delta |u|^2<1$ thus
$$ \lambda_u^{-1} \delta= \max \{ \delta , \delta |u|^2/4\} <  1/4.$$
Therefore, the induction hypothesis applies at the scale $\lambda_u^{-1}\delta$, that is
$$
	  \N_\delta\big( T_u(D_{u}\cap G_k ) \big) 
	\leq   \N_{\lambda_u^{-1} \delta }(D_{u}\cap G_k) 
	\leq   \N_{\lambda_u^{-1} \delta }(G_k) 
	\leq   \xi_k ( \delta / \lambda_u )^{-1}  \big( \log( \lambda_u / \delta) \big)^{k-1} .
$$
It follows from $ 4 <\lambda_u \delta^{-1} \leq \delta^{-1} $ that
$$
	  \N_\delta\big( T_u(D_{u}\cap G_k ) \big) 
	 \leq  \lambda_u \xi_k \delta^{-1}  \big( \log \delta^{-1} )^{k-1}.
$$

For the lattice sum, we use the standard comparison between a radially decreasing function and its integral over unit lattice cells.
For every $|u|>2$,
$$
	\frac{4}{|u|^2}  \leq \int_{u +Q}\frac{16 }{|t|^2}d\leb^2(t).
$$
Since these squares are disjoint and are contained in ${1<|t|<R_\delta+1}$ for $2<|u|<R_\delta$, it follows that
$$
	\sum_{2<|u|<R_\delta}\frac{4}{|u|^2}
	\leq \int_{1<|t|<R_\delta+1}\frac{16}{|t|^2}d\leb^2(t)
	= 32\pi\log(R_\delta+1)
	\leq 80 \log \delta^{-1}.
$$
Since
$\card \{ u \in \ZI : 0<|u| \leq 2 \}=12$, we have
$$
	\sum_{0<|u| < R_\delta} {\lambda_u} = 12+ \sum_{2<|u| < R_\delta}\frac{4}{|u|^2}
	 \leq 80 \log \delta^{-1} +12 \leq 92 \log \delta^{-1}.
$$
Thus,
\begin{align}
	 \N_\delta\bigg(\bigcup_{0<|u|< R_\delta}T_u \big( D_{u}\cap G_k \big)\bigg)
	\leq & \sum_{0<|u| < R_\delta } \N_\delta\big( T_u(D_{u}\cap G_k) \big) \nonumber\\
	\leq &\xi_k  \delta^{-1} ( \log \delta^{-1} )^{k-1} \sum_{0<|u|< R_\delta } \lambda_u \nonumber\\
	\leq & 92 \xi_k \delta^{-1}  ( \log \delta^{-1} )^{k} .\label{eq:cover-count-1}
\end{align}

On the other hand, for every $|u|\geq R_\delta$ and $v\in G_k$, we have 
$$|u+v| > |u|-1 \geq R_\delta-1.$$
Hence,
\begin{equation}\label{eq:cover-count-2}
	\N_\delta \Big( \bigcup_{|u| \geq R_\delta}T_u(G_k) \Big)
	\leq \N_\delta \big( B \big(0, (R_\delta-1)^{-1} \big) \big)
	\leq  \N_\delta \big( B(0,2\delta^{1/2}) \big) \leq  20 \delta^{-1}.
\end{equation}
By the recursive definition,
$$
	G_{k+1} = \bigcup_{ u \in \ZI \setminus \{0 \}}T_u\big( D_{u}\cap G_k \big) \subset
	\bigcup_{0<|u|< R_\delta}T_u\big( D_{u}\cap G_k \big)
	\cup \bigcup_{|u| \geq R_\delta}T_u(G_{k}).
$$
Combining this with \eqref{eq:cover-count-1} and \eqref{eq:cover-count-2}, for all $0<\delta<1/4$,
$$
	\N_\delta(G_{k+1})\leq \xi_{k+1} \delta^{-1} (\log \delta^{-1} )^k .
$$
Taking $\xi_{k+1} = 92\xi_k+20$ completes the induction.
\end{proof}

\begin{proof}[Proof of Theorem~\ref{thm:main}]
By Lemma~\ref{lem:low-dim},
$$
	\lowdimB G_m \geq \lowdimB H_m\geq1.
$$
By Lemma~\ref{lem:cover-upper}, for every $m \geq 1$, we have
$$
	\updimB G_m = \varlimsup_{\delta\to 0^+} \frac{\log \N_\delta(G_m)}{-\log\delta}
	\leq \varlimsup_{\delta\to 0^+} \frac{\log\big(\xi_m\delta^{-1}(\log \delta^{-1} )^{m-1}\big)}{-\log\delta} =1.
$$
By Lemma~\ref{lem:subset-H-G}, we have $\updimB H_m\leq\updimB G_m\leq1$.
Hence,
$$
\dimB H_m=\dimB G_m=1.
$$
\end{proof}

\section{The Minkowski content}\label{S3}

\subsection{The exact content at the first level}
By Theorem~\ref{thm:main}, all the sets $H_m$ and $G_m$ have Minkowski dimension $1$.
Thus their critical Minkowski contents are determined by the asymptotic behavior of
$\leb^2((H_m)_r)/r$ and $\leb^2((G_m)_r)/r$ as $r\to0^+$.
We first consider $H_1$.

Recall that $Q=[-1/2,1/2)^2$ is the Voronoi cell of $0$ for the lattice $\ZI$.
For each $u \in \ZI \setminus\{0\}$ and $\vep>0$, define
$$
	C_u(\vep):=\leb^2\big((H_1)_\vep\cap \{ 1/(u+\xi) : \xi \in Q \} \big).
$$

\begin{lem}\label{lem:leb-h1-cu}
For every $0<\vep<1/20$, we have
$$
	\leb^2\big((H_1)_\vep\big)=\sum_{u \in \ZI : |u|>1} C_u(\vep).
$$

\end{lem}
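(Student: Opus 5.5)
The plan is to show that the sets
$$
P_u:=\{1/(u+\xi):\xi\in Q\},\qquad u\in\ZI\setminus\{0\},
$$
together with $P_0:=\{1/\xi:\xi\in Q\setminus\{0\}\}$, form a countable partition of $\C\setminus\{0\}$ into Borel sets. Then the lemma reduces to checking that $(H_1)_\vep$ does not meet $P_0$ or any $P_u$ with $|u|=1$.

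\emph{Step 1: the partition.} The half-open translates $u+Q$, $u\in\ZI$, tile $\C$ exactly, so they tile $\C\setminus\{0\}$ once $0$ is removed from $Q$. Since $w\mapsto 1/w$ is a bijection of $\C\setminus\{0\}$ onto itself, the images $P_u$ ($u\neq0$) and $P_0$ are pairwise disjoint and cover $\C\setminus\{0\}$. Each $u+Q$ is Borel and inversion is a homeomorphism of $\C\setminus\{0\}$, so every piece is Borel. Since $\{0\}$ is Lebesgue-null, countable additivity of $\leb^2$ gives
$$
\leb^2\big((H_1)_\vep\big)=\leb^2\big((H_1)_\vep\cap P_0\big)+\sum_{u\neq0}C_u(\vep).
$$

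\emph{Step 2: the excluded pieces are disjoint from $(H_1)_\vep$.} This is the main point; it needs only crude modulus bounds.

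For the neighbourhood itself: by \eqref{eq:h1}, every point of $H_1$ is $1/v$ with $v\in\A$, so $|v|\geq2$ and $|1/v|\leq1/2$. Hence every point of $(H_1)_\vep$ has modulus less than $1/2+\vep<0.55$.

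For $P_0$: if $\xi\in Q\setminus\{0\}$ then $|\xi|\leq\sqrt2/2$, so $|1/\xi|\geq\sqrt2>0.55$. Thus $(H_1)_\vep\cap P_0=\emptyset$.

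For $|u|=1$: if $\xi\in Q$ then $|u+\xi|\leq1+\sqrt2/2$, so
$$
\Big|\frac{1}{u+\xi}\Big|\geq\frac{1}{1+\sqrt2/2}=2-\sqrt2>0.58>0.55 .
$$
Thus $C_u(\vep)=0$ for $u\in\{\pm1,\pm i\}$.

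Removing these zero terms from the sum in Step 1 leaves exactly $\sum_{|u|>1}C_u(\vep)$, which is the claim.

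The only place the hypothesis $\vep<1/20$ enters is the margin $1/2+\vep<2-\sqrt2$; the choice of threshold is comfortable. No measurability subtleties arise, since $(H_1)_\vep$ is open.
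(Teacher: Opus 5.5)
Your Step 2 rests on a false claim: it is not true that every $v\in\A$ satisfies $|v|\geq 2$. Among the four Gaussian integers of modulus $\sqrt2$, the definition of $\A$ removes only $1+i$, $1-i$ and $-1-i$. So $-1+i\in\A$, and $z_0:=1/(-1+i)=(-1-i)/2$ is a point of $H_1$. It is the corner of $Q$; compare $H_1\cap\partial Q$ in the proof of Lemma~\ref{lem:closure-cover-H-1}. Since $|z_0|=\sqrt2/2\approx0.707$, your bound $|z|<0.55$ on $(H_1)_\vep$ is false. The $P_0$ case survives, because $\sqrt2/2+\vep<\sqrt2$. The case $|u|=1$ does not: $|z_0|>2-\sqrt2$, so comparing moduli cannot show that $B(z_0,\vep)$ misses $P_u$ for $u\in\{\pm1,\pm i\}$. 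For the same reason, your remark that $\vep<1/20$ enters only through $1/2+\vep<2-\sqrt2$ is wrong.

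The fix is to treat $v=-1+i$ separately by showing that the whole ball lies in the correct piece. For $|h|<\vep<1/20$,
\[
\frac{1}{z_0+h}-(-1+i)=\frac{1-(-1+i)(z_0+h)}{z_0+h}=\frac{(1-i)h}{z_0+h},\qquad \Big|\frac{(1-i)h}{z_0+h}\Big|\leq\frac{\sqrt2\,\vep}{\sqrt2/2-\vep}<0.11 .
\]
Hence $1/(z_0+h)\in(-1+i)+Q$, that is, $B(z_0,\vep)\subset P_{-1+i}$. By your Step 1 this piece is disjoint from $P_0$ and from every $P_u$ with $|u|=1$. All other $v\in\A$ do satisfy $|v|\geq 2$, and your modulus argument handles them correctly.

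With this addition your proof is correct and is essentially the paper's argument. The paper shows $(H_1)_\vep\subset\{0\}\cup\bigcup_{|u|>1}P_u$ by placing $B(1/v,1/20)$ inside $P_v$ when $1<|v|\leq2$, and inside $B(0,1/2)$ when $|v|>2$. It then uses the same disjointness of the sets $1/(u+Q)$.
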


\begin{proof}

For every $1 < |u| \leq 2 $, we have $ B(1/u,1/20) \subset  \{ 1/(u+ \xi ) : \xi \in Q \}$.
For all $|u| >2$, we have
$$	
B(1/u,1/20) \subset B(0,1/2)  \subset \{ 0\} \cup \bigcup_{u \in \ZI : |u|>1} \{ 1/(u+ \xi ) : \xi \in Q \}.
$$
Consequently, for every $0 < \vep <1/20$ and $v \in \A$, we have
$$
	B(1/v, \vep )  \subset \{ 0\} \cup \bigcup_{u \in \ZI : |u|>1} \{ 1/(u+ \xi ) : \xi \in Q \}.
$$
Therefore
$$
	(H_1)_\vep = \bigcup_{v \in \A} B(1/v, \vep) \subset \{0\} \cup \bigcup_{ u \in \ZI : |u|>1} \{ 1/(u+ \xi ) : \xi \in Q \}.
$$
The sets $u+Q$ with $u\in\ZI$, form a disjoint partition of $\C$.
Since $f(z)=z^{-1}$ is injective on $\C\setminus\{0\}$, the sets $f(u+Q)$, $u\in\ZI \setminus \{ 0 \}$ are pairwise disjoint.
Since $\leb^2 (\{0\})=0$, it follows that
$$
	\leb^2\big((H_1)_\vep\big)
	= \leb^2\bigg((H_1)_\vep \cap \bigcup_{ u \in \ZI : |u|>1} \{ 1/(u+ \xi ) : \xi \in Q \} \bigg)
	=\sum_{u \in \ZI : |u|>1} C_u(\vep).
$$
\end{proof}

\begin{lem}\label{lem:est-cu-r}
For every $R>1$, we have 
$$
	\lim_{\vep \to 0^+} \sum_{u \in \ZI : 0<|u|<R} \frac{C_u(\vep)}{\vep} =0.
$$

\end{lem}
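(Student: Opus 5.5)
The sum in Lemma~\ref{lem:est-cu-r} runs over the finitely many $u\in\ZI$ with $0<|u|<R$. It therefore suffices to show that $C_u(\vep)/\vep\to0$ for each fixed $u$; the limit of a finite sum is then zero. The strategy is to bound $C_u(\vep)$ by the area of a small ball around $1/u$ together with a contribution from far-away points of $H_1$ that vanishes identically for small $\vep$. The result should be $C_u(\vep)=O(\vep^2)$.

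Fix $u$ with $0<|u|<R$ and write $P_u:=\{1/(u+\xi):\xi\in Q\}$. The set $P_u$ is bounded, and its closure avoids $0$ because $u+Q$ is bounded. Its closure is $\{1/(u+\xi):\xi\in\overline{Q}\}$, which stays away from $0$ as long as $0\notin u+\overline Q$. This holds for $u\neq0$, since $|u|\geq1$ and the points of $\overline Q$ have norm at most $\sqrt2/2<1$. Points of $H_1$ have the form $1/v$ with $v\in\A$, and $1/v$ lies in $P_v$, since $v=v+0$ and $0\in Q$. We then split into two cases.

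\emph{Case $v\neq u$.} The point $1/v$ lies in $P_v$, which is disjoint from $P_u$. We need a positive distance between $1/v$ and $P_u$, uniform over all $v\neq u$. The map $z\mapsto1/z$ is bi-Lipschitz on the bounded region $\{|z|\ge 1/2,\ |z|\le R+2\}$, which contains $u+\overline Q$. The distance from $v$ to $u+\overline Q$ is at least $1/2$ whenever $v\ne u$, because $v$ is the centre of its own cell and the cells tile $\C$. For $|v|\le 2R+2$, this gives $\dist(1/v,P_u)\ge c_R>0$ by the Lipschitz bound on the inverse map. For $|v|>2R+2$, we have $|1/v|<1/(2R+2)$, while $P_u$ stays at distance at least $1/(R+1)$ from $0$. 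Hence $\dist(1/v,P_u)\ge c'_R>0$ uniformly in this case as well. So for $\vep<\min(c_R,c'_R)$, no ball $B(1/v,\vep)$ with $v\ne u$ meets $P_u$.

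\emph{Case $v=u$.} If $u\in\A$, the only remaining contribution is $B(1/u,\vep)\cap P_u$, which has area at most $\pi\vep^2$. If $u\notin\A$ (for instance $|u|\le1$ or $u$ among the excluded points), there is no such term at all.

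Combining the two cases, $C_u(\vep)\le\pi\vep^2$ for all sufficiently small $\vep$, so $C_u(\vep)/\vep\to0$, and summing over the finitely many $u$ gives the lemma. The main obstacle is making the uniform separation in the first case rigorous, in particular handling the boundary of the half-open square $Q$. Passing to closures as above takes care of this, because distinct lattice points remain at distance at least $1/2$ from each other's closed cells.
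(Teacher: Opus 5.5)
Your proof is correct and follows essentially the paper's route: the points $1/v$ with $|v|$ large have $\vep$-neighbourhoods inside a small disc about $0$, which misses every cell $1/(u+Q)$ with $|u|<R$, while the finitely many remaining points contribute only $O(\vep^2)$. The paper does not need your per-cell separation for $v\neq u$; it simply bounds $C_u(\vep)\le\leb^2(\Omega_\vep)\le 20R^2\pi\vep^2$ for the finite set $\Omega=\{1/v: v\in\A,\ |v|\le 2R\}$, so the ``main obstacle'' you flag can be avoided (and, incidentally, your annulus for the bi-Lipschitz bound should extend to $|z|\le 2R+2$ so that it contains $v$, although the identity $|1/v-1/z|=|v-z|/(|v||z|)$ gives the bound directly).
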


\begin{proof}
Fix $R>1$. 
Define 
$$
	\Omega=\{ 1/v : v \in \A , |v| \leq 2R \} \quad \mbox{and} \quad \Lambda=\{ 1/v : v \in \A , |v| > 2R \}
$$
Then we have $H_1 = \Omega \cup \Lambda$.
Hence there exists $ \vep_0>0 $ small enough that 
$$
	\Lambda_\vep \subset (B(0,1/(2R)))_{\vep_0} = B(0, \vep_0+ 1/(2R)) \subset B(0,1/(R+1)).
$$

For every $u\in\ZI$ with $0<|u|<R$ and every $\xi\in Q$, we have
$$
\Big|\frac{1}{u+\xi} \Big| > \frac{1}{|u|+1} > \frac{1}{R+1}.
$$
Thus, by the definition of $C_u$, for any $0<\vep<\vep_0$ we have
$$
		C_u(\vep)  \leq
		 \leb^2\Big( \Omega_\vep\cap \frac{1}{u+Q} \Big)+\leb^2\Big(\Lambda_\vep\cap \frac{1}{u+Q} \Big)
		 = \leb^2\Big( \Omega_\vep\cap \frac{1}{u+Q} \Big) 
		 \leq \leb^2 ( \Omega_\vep  ).
$$

Now we focus on the cardinality of $\Omega$ and $\{ u \in \ZI : |u|< R \}$. It is clear that
$$
	\card\Omega \leq 5(2R)^2=20R^2 \quad \mbox{and} \quad \card \{ u \in \ZI : |u|< R \} \leq 5R^2. 
$$
Therefore,
$$
	 \sum_{u \in \ZI : 0<|u|<R} C_u(\vep)
	\leq  \sum_{u \in \ZI : 0<|u|<R} \leb^2 ( \Omega_\vep  )
	\leq  5R^2 \leb^2 ( \Omega_\vep  )
	\leq  5R^2 \cdot 20  R^2 \cdot \pi\vep^2
	.
$$
This implies that 
$$
	0 \leq \lim_{\vep \to 0^+} \sum_{u \in \ZI : 0<|u|<R} \frac{C_u(\vep)}{\vep} 
	\leq \lim_{\vep \to 0^+}  \frac{100 \pi R^4 \vep^2}{\vep} =0.
$$
Hence the proof is complete.
\end{proof}

\begin{lem}\label{lem:min-est-1}
Let $0 < \delta < 1/2$. For every $u \in \ZI$ with $|u|>8 /\delta$ and $\xi \in Q$, we have
$$
	\frac{(1-\delta)}{|u|^4} \leq \frac{1}{|u+\xi|^4} \leq \frac{(1+\delta)}{|u|^4} .
$$
\end{lem}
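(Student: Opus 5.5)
The plan is to compare $|u+\xi|$ with $|u|$ directly, using the triangle inequality and the fact that $\xi\in Q$ satisfies $|\xi|\leq \sqrt{2}/2<1$. Put $t=|\xi|/|u|$. Since $|u|>8/\delta$, we get
$$
t<\frac{\delta}{8}<\frac{1}{16}.
$$
The triangle inequality gives
$$
|u|(1-t)\leq |u+\xi|\leq |u|(1+t).
$$
Raising to the fourth power and inverting yields
$$
\frac{1}{|u|^4(1+t)^4}\leq \frac{1}{|u+\xi|^4}\leq \frac{1}{|u|^4(1-t)^4}.
$$
The claim therefore reduces to two elementary inequalities: $(1+t)^{-4}\geq 1-\delta$ and $(1-t)^{-4}\leq 1+\delta$, both for $0\le t\le \delta/8$ and $0<\delta<1/2$.

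For the lower bound, Bernoulli's inequality gives $(1+t)^{-4}\geq 1-4t\geq 1-\delta/2\geq 1-\delta$.

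For the upper bound, I will use convexity of $s\mapsto(1-s)^{-4}$ on $[0,1/16]$. This gives $(1-t)^{-4}\leq 1+ct$, where $c$ is the slope of the chord from $0$ to $1/16$:
$$
c=16\big((16/15)^4-1\big).
$$
Since $(16/15)^4\approx 1.294$, we have $c\approx 4.7<8$. Hence $(1-t)^{-4}\leq 1+8t\leq 1+\delta$. Alternatively, the mean value theorem gives the bound $4(1-t)^{-5}t\leq 4\cdot(16/15)^5\,t<8t$.

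The only step needing care is this constant check in the upper bound. The margin is comfortable, because $|\xi|\le\sqrt2/2$ leaves room beyond the crude bound $|\xi|<1$. Combining the two bounds gives exactly the stated inequality.
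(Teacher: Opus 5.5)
Your argument is correct and is essentially the paper's: the paper simply cites $|u|-1\leq|u+\xi|\leq|u|+1$ together with $|u|^{-1}<\delta/8$, and your Bernoulli and chord (or mean-value) estimates supply the elementary checks it leaves implicit. One small correction to your closing remark: your bound $t<\delta/8$ only uses $|\xi|<1$, so the comfortable margin comes from $c\approx 4.7<8$, not from $|\xi|\leq\sqrt{2}/2$.
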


The estimate in Lemma~\ref{lem:min-est-1} follows directly from $|u|-1\leq |u+\xi| \leq |u|+1$ and $|u|^{-1}<\delta/8$.
Under the same assumptions, we also obtain the following distance estimate.

\begin{lem}\label{lem:min-est-2}
Let $0 < \delta < 1/2$. For any $u \in \ZI$ with $|u|>8 /\delta$ and $\xi \in Q$, we have
$$
	(1-\delta)\frac{|\xi|}{|u|^2} \leq \dist\Big( \frac{1}{u+\xi},H_1 \Big) \leq (1+\delta)\frac{|\xi|}{|u|^2}  .
$$
\end{lem}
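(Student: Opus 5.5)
The plan is to compare $z:=1/(u+\xi)$ with the single point $1/u$ for the upper bound, and with every point $1/w$, $w\in\A$, for the lower bound. The starting identity is
$$
\Big|\frac{1}{u+\xi}-\frac{1}{w}\Big|=\frac{|u+\xi-w|}{|u+\xi|\,|w|}.
$$
Throughout I use $|\xi|\le \sqrt2/2$ and the hypothesis $|u|>8/\delta>16$.

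\emph{Upper bound.} Since $|u|>16$, we have $u\in\A$ and $1/u\in Q$, so $1/u\in H_1$ by \eqref{eq:h1}. Taking $w=u$ in the identity gives
$$
\dist(z,H_1)\le\frac{|\xi|}{|u|\,|u+\xi|}\le\frac{|\xi|}{|u|(|u|-1)}.
$$
It then suffices to check $|u|/(|u|-1)\le 1+\delta$. This holds because $|u|>8/\delta\ge 1+1/\delta$.

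\emph{Lower bound.} I bound $|z-1/w|$ from below for every $w\in\A$, splitting according to $d:=|w-u|$. The key geometric input is that $Q$ is the Voronoi cell of $0$ for $\ZI$. Hence $u$ is a nearest lattice point to $u+\xi$, and so $|u+\xi-w|\ge|\xi|$ for every $w\in\ZI$.
\begin{itemize}
\item If $d\le\delta|u|/4$ (this includes $w=u$), then $|w|\le |u|(1+\delta/4)$ and $|u+\xi|\le|u|+1$. Therefore
$$
\Big|z-\frac1w\Big|\ge\frac{|\xi|}{|u|^2(1+1/|u|)(1+\delta/4)}\ge(1-\delta)\frac{|\xi|}{|u|^2},
$$
using $1/|u|<\delta/8$ and an elementary inequality for $\delta<1/2$.
\item If $d>\delta|u|/4$, then $d>2$. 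Consequently $|u+\xi-w|\ge d-|\xi|\ge d/2$ and $|w|\le|u|+d$. The function $d/(|u|+d)$ is increasing in $d$, so
$$
\Big|z-\frac1w\Big|\ge\frac{1}{|u|+1}\cdot\frac{\delta/8}{1+\delta/4}\gtrsim\frac{\delta}{10|u|}.
$$
This exceeds $(1-\delta)|\xi|/|u|^2$ because $|\xi|/|u|\le(\sqrt2/2)(\delta/8)<\delta/11$.
\end{itemize}
Taking the infimum over $w\in\A$ then gives the lower bound.

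The main obstacle is the far-field case of the lower bound. Points $1/w$ with $|w|$ much larger than $|u|$ make the denominator $|w|$ large, and the bound must not degrade there. The monotonicity of $d/(|u|+d)$ handles this uniformly, but the numerical constants, in particular whether $\delta/10$ genuinely beats $\delta/11$ after all the losses, need a careful check. If they are too tight, I would instead split the second case at $d\le|u|$ versus $d>|u|$. In the range $d>|u|$ one can use the crude bound $|z-1/w|\ge|z|-1/|w|$, since $|w|>|u|+1$ there as well.
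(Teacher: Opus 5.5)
Your proof is correct. Your upper bound is the same as the paper's: compare with the single point $1/u$. The lower bound takes a different route.

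You split $w\in\A$ into a near field $|w-u|\le\delta|u|/4$ and a far field, and in the far field you use that $d\mapsto d/(|u|+d)$ is increasing. The paper needs no case split. It bounds $|w|\le|w-u-\xi|+|u+\xi|$ and sets $x=|w-u-\xi|$, so that
$$
\frac{|w-u-\xi|}{|w|}\ge\frac{x}{x+|u+\xi|}\ge\frac{|\xi|}{|\xi|+|u+\xi|}.
$$
This uses only that $x\mapsto x/(x+c)$ is increasing and that $x\ge|\xi|$ by the Voronoi property. The result is $\dist(1/(u+\xi),H_1)\ge|\xi|/((|u|+1)(|u|+2))$ for every $u\in\ZI\setminus\{0\}$; $\delta$ enters only in the final comparison with $(1-\delta)|\xi|/|u|^2$.

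In effect, the paper's step is your far-field monotonicity measured from $u+\xi$ instead of from $u$. That lets the Voronoi bound handle near and far lattice points at once. Your version costs a $\delta$-dependent threshold and uses $|u|>8/\delta$ twice, once to get $d>2$ and once to close the constants. In return it shows explicitly that distant lattice points only produce distances of order $\delta/|u|$, well above $|\xi|/|u|^2$.

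Your constants do close, so the fallback split you mention is unnecessary. Since $\delta<1/2$ and $|u|>16$, one has $8(1+\delta/4)(|u|+1)<9(|u|+1)\le 10|u|$, so your far-field bound exceeds $\delta/(10|u|)$. Meanwhile $(1-\delta)|\xi|/|u|^2<\sqrt2\,\delta/(16|u|)<\delta/(11|u|)$.
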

\begin{proof}

Fix $u \in \ZI$ with $ |u|>8 /\delta$. Then we have $|u|/(|u|-1)<1+\delta$ and
$$
	\dist\Big( \frac{1}{u+\xi},H_1 \Big) \leq
	\Big|\frac{1}{u+\xi} - \frac{1}{u} \Big|
	\leq \frac{|\xi|}{(|u|-1)|u|}
	\leq  (1+\delta)\frac{|\xi|}{|u|^2}  .
$$
For the lower bound, recall that $H_1=\{1 /v : v \in \A\}$. Then we have
$$
	\dist\Big( \frac{1}{u+\xi},H_1 \Big)
	=\inf_{v \in \A}\Big| \frac{1}{u +\xi} -\frac{1}{v} \Big|
	=\inf_{v \in \A}  \frac{|v-u- \xi|}{|u+\xi| |v|}
	=\frac{1}{|u+\xi|} \inf_{v \in \A} \frac{|v-u-\xi| }{|v|}.
$$
By the Voronoi property of $Q$, we have $|v-u-\xi|\geq |\xi|$ for all $v\in\A$.
Therefore,
$$
	\inf_{v \in \A} \frac{|v-u- \xi|}{ |v| }
	\geq \inf_{v \in \A} \frac{ |v-u-\xi|  }{ |v-u-\xi|  +|u+\xi| }
	= \frac{ |\xi |}{ |\xi| +|u+\xi| }
	\geq \frac{ |\xi |}{ |u| +2 } .
$$
Using the fact that $x^2 >(1-\delta)(x+1)(x+2)$ holds for all $x > 8 /\delta$, we have
$$	
	\dist\Big( \frac{1}{u+\xi},H_1 \Big)
	\geq  \frac{ |\xi |}{ |u+\xi|(|u| +2 )}
	\geq \frac{ |\xi |}{(|u|+1)( |u|+ 2 )}
	\geq \frac{(1-\delta)|\xi|}{|u|^2}.
$$
This completes the proof.
\end{proof}

To formulate the resulting cell estimate, for $t>0$ write
$$
A(t):=\leb^2(Q\cap B(0,t)).
$$

\begin{lem}\label{lem:min-est-3}
Fix $0 < \delta < 1/2$. Then for any $|u|>8 /\delta$ and $\vep>0$,
\begin{equation}\label{eq:g1-min-2}
	\frac{1-\delta}{|u|^4} A \Big(  \frac{\vep |u|^2}{ 1+\delta}\Big)\leq  C_u(\vep)
	\leq \frac{1+\delta}{|u|^4} A \Big(  \frac{\vep |u|^2}{ 1-\delta}\Big).
\end{equation}

\end{lem}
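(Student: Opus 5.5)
The plan is to compute $C_u(\vep)$ by the change of variables $z=f(u+\xi)=1/(u+\xi)$, which maps $Q$ bijectively onto the cell $f(u+Q)=\{1/(u+\xi):\xi\in Q\}$. The Jacobian of $\xi\mapsto 1/(u+\xi)$ is $|u+\xi|^{-4}$, since the complex derivative has modulus $|u+\xi|^{-2}$. This gives
$$
C_u(\vep)=\int_{Q} \mathbf{1}\Big\{\dist\Big(\frac{1}{u+\xi},H_1\Big)<\vep\Big\}\,\frac{d\leb^2(\xi)}{|u+\xi|^4}.
$$
Here I use that $(H_1)_\vep$ is exactly the set of points at distance $<\vep$ from $H_1$. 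This holds because $(H_1)_\vep$ is defined as a union of open balls, so $z\in(H_1)_\vep$ iff $|z-x|<\vep$ for some $x\in H_1$, iff $\dist(z,H_1)<\vep$.

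Next I sandwich the indicator using Lemma~\ref{lem:min-est-2}. If $(1+\delta)|\xi|/|u|^2<\vep$, then the distance is $<\vep$, so the point is in the set. Conversely, if the point is in the set, then $(1-\delta)|\xi|/|u|^2<\vep$. Hence
$$
Q\cap B\Big(0,\frac{\vep|u|^2}{1+\delta}\Big)\subset\Big\{\xi\in Q:\frac{1}{u+\xi}\in(H_1)_\vep\Big\}\subset Q\cap B\Big(0,\frac{\vep|u|^2}{1-\delta}\Big).
$$
I then bound the weight $|u+\xi|^{-4}$ above and below by Lemma~\ref{lem:min-est-1}, and integrate the indicator to get $A(\cdot)$. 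This yields exactly \eqref{eq:g1-min-2}.

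The only care needed is the strict versus non-strict inequalities at the boundary, which is harmless since the regions differ only by a boundary circle. Also, $|u|>8/\delta>16$ ensures $u+\xi\neq 0$, so the change of variables is a diffeomorphism. I expect no real obstacle here; the main point is to correctly identify $C_u(\vep)$ as a pulled-back integral over $Q$.
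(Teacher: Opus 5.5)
Your proposal is correct and follows essentially the same route as the paper: you pull $C_u(\vep)$ back to an integral over $Q$ with Jacobian weight $|u+\xi|^{-4}$, sandwich $\{\xi\in Q:\dist(1/(u+\xi),H_1)<\vep\}$ between $Q\cap B(0,\vep|u|^2/(1\pm\delta))$ via Lemma~\ref{lem:min-est-2}, and bound the weight with Lemma~\ref{lem:min-est-1}. Your remark about strict versus non-strict inequalities is not needed, since both inclusions hold exactly with the open balls used in the definition of $A$.
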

\begin{proof}
Let $f(z):=1/z$. The Jacobian determinant of $f$ is given by
$$
	|\det Df(z)|=|z|^{-4} , \quad z \in \C\setminus\{0\}.
$$
Set $Q_{u,\vep}= \big\{\xi\in Q: \dist (1/(u+\xi),H_1 )<\vep \big\}$. By the definition of $C_u(\vep)$, we have
$$
	C_u(\vep) = \int_{Q_{u,\vep}} |u+\xi|^{-4}\,d\leb^2(\xi).
$$
From Lemma~\ref{lem:min-est-2}, we have
$$
	Q \cap B \Big( 0,  \frac{\vep |u|^2}{ 1+\delta} \Big) \subset Q_{u,\vep}
	\subset Q \cap B \Big( 0,  \frac{\vep |u|^2}{ 1-\delta}\Big).
$$
Combining this inclusion with Lemma~\ref{lem:min-est-1},
$$
	C_u(\vep) \leq \int_{Q \cap B \big( 0,  \vep |u|^2 /(1-\delta)\big)} \frac{1+\delta}{|u|^4}\,d\leb^2(\xi)
	= \frac{1+\delta}{|u|^4} A \Big(  \frac{\vep |u|^2}{ 1-\delta}\Big).
$$
The lower bound follows similarly.
\end{proof}

\begin{lem}\label{lem:lattice-tail}
For every $R\geq 2$, we have
$$
	\bigg| \frac{\pi}{R^2} - \sum_{u\in\ZI : |u|\geq R} \frac{1}{|u|^4}  \bigg| \leq \frac{14}{R^3}.
$$
\end{lem}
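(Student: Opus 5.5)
We plan to use the standard comparison between a lattice sum and the corresponding integral over unit lattice cells, exactly as in the proof of Lemma~\ref{lem:cover-upper}, but now keeping track of the error. The idea is to write
$$
	\sum_{|u|\geq R}\frac{1}{|u|^4}=\sum_{|u|\geq R}\int_{u+Q}\frac{1}{|u|^4}\,d\leb^2(t).
$$
Since the cells $u+Q$ are disjoint and have unit area, the sum becomes an integral of a step function $F(t)=|u|^{-4}$ for $t\in u+Q$. We compare this with $\int |t|^{-4}\,d\leb^2(t)$ over the region $|t|\geq R$, which equals $2\pi\int_R^\infty s^{-3}\,ds=\pi/R^2$.

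The comparison has two error sources, and we bound each separately.

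\textbf{Pointwise error.} For $t\in u+Q$ we have $|t-u|\leq \sqrt2/2$. By the mean value theorem applied to $s\mapsto s^{-4}$,
$$
	\big||u|^{-4}-|t|^{-4}\big|\leq 4\cdot\frac{\sqrt2}{2}\,\big(\min(|u|,|t|)\big)^{-5}.
$$
Since $\min(|u|,|t|)\geq |t|-\sqrt2/2$, and for $|t|\geq R-1\geq 1$ we can compare $(|t|-\sqrt2/2)^{-5}$ with a constant multiple of $|t|^{-5}$, the total contribution is at most a constant times $\int_{|t|\geq R-1}|t|^{-5}\,d\leb^2(t)\asymp (R-1)^{-3}$.

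\textbf{Boundary error.} The union $\bigcup_{|u|\geq R}(u+Q)$ differs from $\{|t|\geq R\}$ only inside the annulus $R-\sqrt2/2\leq|t|\leq R+\sqrt2/2$. This annulus has area $2\pi\sqrt2 R$, and on it the integrand is at most $(R-\sqrt2/2)^{-4}$, so this error is $O(R^{-3})$.

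For $R\geq2$, the quantities $R-1$ and $R-\sqrt2/2$ are at least $R/2$ and about $0.65R$ respectively. The main obstacle is purely numerical: checking that the constants add up to at most $14$. The first error, handled crudely with $|t|\geq R/2$ factors, might give something like $2\sqrt2\cdot 2^5\cdot 2\pi/(3(R/2)^3)$, which is far too big. So we will be more careful:

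1. Apply the pointwise bound only for cells with $|u|\geq R$, using $\min(|u|,|t|)\geq |u|-\sqrt2/2$.
2. Sum $2\sqrt2(|u|-0.71)^{-5}$ over lattice points, again comparing with an integral.

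If the constant still exceeds $14$, we would instead treat $R\in[2,R_0]$ separately by directly bounding both $\pi/R^2$ and the sum. The sum is at most $\sum_{|u|\geq2}|u|^{-4}$, which is a small computable number, while $14/R^3$ is large for moderate $R$. For $R\geq R_0$, the asymptotic constants are comfortably below $14$.
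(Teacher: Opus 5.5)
Your plan has a genuine gap: the content of this lemma is the explicit constant $14$ for \emph{every} $R\ge 2$, and neither half of your plan delivers it. An $O(R^{-3})$ bound is easy; the constant is not.

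For large $R$, the estimates you actually write down have leading terms $2\sqrt2\sum_{|u|\ge R}(|u|-\sqrt2/2)^{-5}\sim\frac{4\sqrt2\pi}{3}R^{-3}$ from the pointwise error and $2\sqrt2\pi R\,(R-\sqrt2/2)^{-4}\sim 2\sqrt2\pi R^{-3}$ from the annulus. These add up to $\frac{10\sqrt2\pi}{3}R^{-3}\approx 14.8\,R^{-3}$. So the asymptotic constant is \emph{above} $14$, not comfortably below it. To get under $14$ you would need ideas that are not in the proposal. One is that the two boundary pieces $\bigcup_{|u|\ge R}(u+Q)\setminus\{|t|\ge R\}$ and $\{|t|\ge R\}\setminus\bigcup_{|u|\ge R}(u+Q)$ enter with opposite signs, so only the larger one, of size about $\sqrt2\pi R^{-3}$, counts. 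Another is that $\int_{u+Q}(t-u)\,dt=0$, which makes the cell error second order. With the sign refinement alone, the first-order bound still stays above $14R^{-3}$ until $R$ is well beyond $14/\pi$.

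For small $R$, your fallback of bounding $\pi/R^2$ and the lattice sum separately by $14/R^3$ cannot work once $R>14/\pi\approx 4.46$, because then $\pi/R^2>14/R^3$. The uniform bound $\sum_{|u|\ge R}|u|^{-4}\le\sum_{|u|\ge 2}|u|^{-4}\approx 1.03$ beats $14/R^3$ only for $R\le 2.39$. In fact at $R=\sqrt{17}$ the sum itself is about $0.2020$, which exceeds $14\cdot 17^{-3/2}\approx 0.1997$. So on an intermediate range the inequality holds only through genuine cancellation between $\pi/R^2$ and the lattice sum. Nothing in your plan controls that range unless you add the second-order cell estimate and explicitly check the finitely many steps of the sum near $R=2$.

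The paper avoids all of this by passing to the counting function $N(t)=\card\{u\in\ZI:0<|u|<t\}$. The cell comparison is used only once, to get $|N(t)-\pi t^2|\le 6t$ for $t\ge 2$. Then the layer-cake identity $\sum_{|u|\ge R}|u|^{-4}=\int_R^\infty 4(N(t)-N(R))t^{-5}\,dt$, combined with $4\int_R^\infty \pi(t^2-R^2)t^{-5}\,dt=\pi/R^2$, bounds the difference by $24\int_R^\infty (t+R)t^{-5}\,dt=14/R^3$ uniformly for all $R\ge 2$.
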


\begin{proof}
For any $t \geq 2$, we set $N(t) := \card\{u\in\ZI :0<|u|<t\}.$
Comparing the union of the lattice cells $u+Q$ with $B(0,t-\sqrt{2}/2)$ and $B(0,t+\sqrt{2}/2)$ gives
$$
	\pi\Big(t-\frac{\sqrt{2}}{2}\Big)^2 \leq N(t)+1 \leq \pi \Big(t+\frac{\sqrt{2}}{2}\Big)^2.
$$
Hence, $|N(t)-\pi t^2|\leq 6t$ for every $t \geq 2$.
For $R \geq 2$, Tonelli's theorem gives
$$
\sum_{u\in\ZI : |u|\geq R} \frac{1}{|u|^4} = \int_R^\infty \frac{4(N(t)-N(R))}{t^5}\, dt.
$$

Since
$$
4\int_R^\infty\frac{\pi t^2-\pi R^2}{t^5}\,dt=\frac{\pi}{R^2},
$$
we obtain
$$
\bigg| \sum_{|u|\geq R}\frac{1}{|u|^4}-\frac{\pi}{R^2} \bigg|
\leq
4 \int_R^\infty \frac{|N(t)-\pi t^2|+|N(R)-\pi R^2|}{t^5}\,dt 
\leq 24 \int_R^\infty\frac{t+R}{t^5}\,dt
=\frac{14}{R^3}.
$$
This completes the proof.
\end{proof}

\begin{lem}\label{lem:content-1}
We have
$ \M^{1}(H_1) = 4 \pi\log(1+\sqrt 2).$
\end{lem}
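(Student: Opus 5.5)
The plan is to evaluate $\leb^2((H_1)_\vep)/\vep$ through the cell decomposition of Lemma~\ref{lem:leb-h1-cu}, and to identify the limit as
$$
\pi\int_0^\infty \frac{A(s)}{s^2}\,ds=\pi\int_Q\frac{d\leb^2(\xi)}{|\xi|}=4\pi\log(1+\sqrt2).
$$
The first equality is Tonelli's theorem: $A(s)=\int_Q \mathbf 1_{\{|\xi|<s\}}\,d\leb^2(\xi)$ and $\int_{|\xi|}^\infty s^{-2}\,ds=|\xi|^{-1}$. For the second, $\int_{[-1,1]^2}|x|^{-1}\,dx=8\log(1+\sqrt2)$, and the scaling $x\mapsto x/2$ multiplies this by $1/2$.

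\textbf{Step 1: reduce to large cells.} Fix $0<\delta<1/2$ and $R_0=8/\delta+1$. By Lemma~\ref{lem:leb-h1-cu}, $\leb^2((H_1)_\vep)=\sum_{|u|>1}C_u(\vep)$. By Lemma~\ref{lem:est-cu-r}, the terms with $|u|<R_0$ contribute $o(\vep)$. For $|u|\geq R_0$, Lemma~\ref{lem:min-est-3} sandwiches $C_u(\vep)$ between $(1\mp\delta)|u|^{-4}A(c_\pm\vep|u|^2)$, where $c_\pm=(1\pm\delta)^{-1}$. It therefore suffices to show that, for each fixed constant $c>0$,
$$
S(\vep):=\sum_{|u|\geq R_0}\frac{A(c\vep|u|^2)}{|u|^4}=\pi c\,\vep\int_0^\infty\frac{A(s)}{s^2}\,ds+o(\vep).
$$
We then let $\vep\to0$ and afterwards $\delta\to0$. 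The constants $(1\pm\delta)c_\pm$ tend to $1$, which gives both the upper and lower content bounds and hence existence of the limit.

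\textbf{Step 2: the lattice sum (main obstacle).} Here the sum must be replaced by the integral $\int_{|t|\geq R_0}|t|^{-4}A(c\vep|t|^2)\,d\leb^2(t)$ with error $o(\vep)$, uniformly. In polar coordinates, the substitution $s=c\vep r^2$ turns this integral into
$$
\pi c\vep\int_{c\vep R_0^2}^\infty \frac{A(s)}{s^2}\,ds .
$$
This converges to the claimed main term, because $A(s)\leq\pi s^2$ near $0$ and $A\equiv1$ for $s\geq\sqrt2/2$.

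For the comparison, I would write the sum as a Stieltjes integral against $N(t)$, as in the proof of Lemma~\ref{lem:lattice-tail}, with $f(t)=t^{-4}A(c\vep t^2)$ and $N(t)=\pi t^2+E(t)$, $|E(t)|\leq 6t$. Integrating by parts, the error becomes boundary terms of size $O(R_0 f(R_0))$ plus $\int_{R_0}^\infty |E(t)|\,|f'(t)|\,dt$. Here $A$ is nondecreasing and $A(s)\leq\min(\pi s^2,1)$, so $f$ is a difference of two monotone pieces and its total variation can be controlled directly. The bound $A'(s)\leq 2\pi s$ (a circumference bound) gives $|f'(t)|\lesssim t^{-5}\min(\vep^2t^4,1)+\vep t^{-3}\cdot\min(\vep t^2,1)$. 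Hence the error integral is bounded by $\int t\,|f'(t)|\,dt=O(\vep^{3/4})$ after splitting at $t=\vep^{-1/2}$. Indeed, the pieces are $\int^{\vep^{-1/2}}\vep^2 t^{0}\,dt\sim\vep^{3/2}$ and $\int_{\vep^{-1/2}} t^{-4}\,dt\sim\vep^{3/2}$, so the error is in fact even smaller than claimed, and in any case $o(\vep)$.

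As an alternative to the Stieltjes argument for the tail $|u|\geq K\vep^{-1/2}$, one can note that $A\equiv1$ there and apply Lemma~\ref{lem:lattice-tail} directly, which gives $\pi/R^2+O(R^{-3})$ with $R=K\vep^{-1/2}$. That is exactly the integral value up to $O(\vep^{3/2})$. On the bounded range $R_0\leq|u|<K\vep^{-1/2}$ one compares term by term with the cell integrals over $u+Q$, using the Lipschitz bound on $f$. Either route gives $S(\vep)/\vep\to\pi c\int_0^\infty A(s)s^{-2}\,ds$.

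Combining Steps 1 and 2 yields
$$
\lim_{\vep\to0}\frac{\leb^2((H_1)_\vep)}{\vep}=4\pi\log(1+\sqrt2).
$$
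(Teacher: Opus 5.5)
Your proof is correct. Step~1 is exactly the paper's reduction (Lemmas~\ref{lem:leb-h1-cu}, \ref{lem:est-cu-r} and \ref{lem:min-est-3}, then $\delta\to0$). The difference lies in how you evaluate the large-cell sum.

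The paper never compares the lattice sum with an integral over $\R^2$. It uses Tonelli to write
$$S(\vep)=\int_Q\sum_{|u|\geq\max\{R_\delta,(|\xi|/\vep)^{1/2}\}}\frac{1}{|u|^4}\,d\leb^2(\xi),$$
so that for each fixed $\xi$ only a pure tail $\sum_{|u|\geq R}|u|^{-4}$ occurs. Lemma~\ref{lem:lattice-tail} then supplies both the pointwise limit $\pi/|\xi|$ and the dominating function $14/|\xi|$, and dominated convergence gives $\pi\int_Q|\xi|^{-1}\,d\leb^2(\xi)$ at once. The factors $(1\pm\delta)^{-1}$ inside $A$ are absorbed by the scaling inequality $A(rt)/r^2\leq A(t)\leq r^2A(t/r)$, rather than carried along as your parameter $c$. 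That route needs no regularity of $A$ and no summation by parts beyond the one already done in Lemma~\ref{lem:lattice-tail}, but it is purely qualitative.

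Your route treats $t\mapsto t^{-4}A(c\vep t^2)$ as a radial weight and uses $|N(t)-\pi t^2|\leq 6t$ together with $A'(s)\leq 2\pi s$. In return it yields an explicit error $O(\vep^{3/2})$, for fixed $\delta$, between the sum and its integral. The constant then comes from the layer-cake identity $\int_0^\infty A(s)s^{-2}\,ds=\int_Q|\xi|^{-1}\,d\leb^2(\xi)$.

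One slip to correct: you first state that the error integral is $O(\vep^{3/4})$, which would \emph{not} be $o(\vep)$. Your own splitting at $t=\vep^{-1/2}$ shows it is $O(\vep^{3/2})$, and that is the bound the argument needs.

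Incidentally, $Q$ is convex and contains $0$, so $A(s)/s^2$ is nonincreasing. Hence your weight $f(t)=(c\vep)^2A(c\vep t^2)/(c\vep t^2)^2$ is itself monotone, which simplifies the summation by parts.
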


\begin{proof}

Fix $0<\delta<1/2$. Let $R_\delta= 9/\delta$ and $F_\delta=\{ u \in \ZI: |u| \geq R_\delta\}$. 
For any $\vep>0$, set
$$
	S(\vep) := \sum_{u\in F_\delta}  \frac{A(\vep |u|^2)} {|u|^{4}}.
$$
By the definition of $A(t)$,
$$
	A( r t) /r^2 \leq A(t) \leq r^2 A(t/r) , \quad \forall r>1 .
$$
Hence $S( r t) /r^2 \leq S(t) \leq r^2 S(t/r) $ for every $r>1$.
Summing \eqref{eq:g1-min-2} over $u \in F_\delta$, we obtain
$$
	\frac{1 - \delta}{(1 + \delta)^2} S(\vep  )
	\leq (1-\delta)  S\Big( \frac{\vep}{ 1+\delta} \Big) \leq \sum_{u\in F_\delta} C_u(\vep)
	\leq (1 + \delta)  S\Big( \frac{\vep}{1 - \delta} \Big)
	\leq \frac{1 + \delta}{(1 - \delta)^2}  S(\vep  ) .
$$

For any $\xi \in Q$ and $\vep>0$, we write $\Delta_\vep(\xi)=(|\xi|/\vep)^{1/2} $.
By Tonelli's theorem, since all summands are nonnegative, we have
$$
	S(\vep)
	= \int_Q \sum_{u\in  F_\delta} \frac{\mathbf 1_{\{|\xi|\leq \vep |u|^2\}}}{|u|^4}\,d\leb^2(\xi)
	= \int_Q \sum_{|u|\geq \max \{ R_\delta, \Delta_\vep(\xi) \} } \frac{1}{|u|^4} \,d\leb^2(\xi).
$$
For every $\xi\in Q\setminus\{0\}$, we have
$$
	\lim_{\vep \to 0^+} \sum_{|u|\geq \max \{ R_\delta, \Delta_\vep(\xi) \} } \frac{1}{\vep|u|^4}
	= \lim_{\vep \to 0^+}  \sum_{|u|\geq \Delta_\vep(\xi)  } \frac{1}{\vep|u|^4}=\frac{\pi}{|\xi|}.
$$
Moreover, Lemma~\ref{lem:lattice-tail} gives
$$
	 \sum_{|u|\geq \max \{ R_\delta, \Delta_\vep(\xi) \} } \frac{1}{\vep|u|^4}
	\leq  \frac{14}{\vep \max\{ R_\delta, \Delta_\vep(\xi) \}^2} \leq \frac{14}{|\xi|}.
$$
Since a singleton has Lebesgue measure zero and $1/|\xi|$ is integrable on $Q$, the dominated convergence theorem applies.
Thus,
$$
	\lim_{\vep\to0^+}\frac{S(\vep)}{\vep}=\pi\int_Q\frac{d\leb^2(\xi)}{|\xi|}
	=4 \pi \int_0^{1/2}\int_0^{1/2} \frac{dx\,dy}{\sqrt{x^2+y^2}}
	= 4\pi \log(1+\sqrt2).
$$

By Lemmas~\ref{lem:leb-h1-cu} and \ref{lem:est-cu-r}, if the limit exists,
$$
	\lim_{\vep\to 0^+}\frac{\leb^2\big((H_1)_\vep\big)}{\vep}
	= \lim_{\vep\to 0^+}\sum_{u\in \ZI : |u|>1} \frac{ C_u(\vep)}{\vep}
	= \lim_{\vep\to 0^+} \sum_{u\in F_\delta}\frac{ C_u(\vep) }{\vep}   .
$$
First, we have
$$
\varlimsup_{\vep\to 0^+}\frac{\leb^2\big((H_1)_\vep\big)}{\vep}
	=\varlimsup_{\vep\to 0^+} \sum_{u\in F_\delta}\frac{ C_u(\vep) }{\vep}
	\leq \varlimsup_{\vep\to 0^+} \frac{ (1 + \delta)  S(\vep  )  }{(1 - \delta)^2\vep}
	=  \frac{ (1 + \delta)   }{(1 - \delta)^2} 4\pi \log(1+\sqrt2) .
$$
Similarly,
$$
	\varliminf_{\vep\to 0^+}\frac{\leb^2\big((H_1)_\vep\big)}{\vep}
	=\varliminf_{\vep\to 0^+}\sum_{u\in F_\delta}\frac{ C_u(\vep) }{\vep}
	\geq \varliminf_{\vep\to 0^+} \frac{ (1 - \delta)  S(\vep  )  }{(1 + \delta)^2\vep}
	=  \frac{ (1 - \delta)   }{(1 + \delta)^2} 4\pi \log(1+\sqrt2) .
$$
Letting $\delta$ tend to $0^+$ yields
$$
	\lim_{\vep \to 0^+} \frac{\leb^2\big((H_1)_\vep\big)}{\vep} = 4 \pi\log(1+\sqrt2).
$$
This completes the proof.
\end{proof}

\subsection{Infinite content at higher levels}

We now turn to the higher levels. 
The key point is that $H_2$ contains a family of bi-Lipschitz images of $H_1$. 
Summing their tubular volumes produces the following result.


\begin{lem}\label{lem:content-2}
For all sufficiently small $\vep>0$,
$$
	\leb^2\big( (H_2)_\vep \big) \geq 10^{-3} \vep\log(1/\vep).
$$
\end{lem}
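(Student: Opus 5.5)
The plan is to realize $H_2$ as containing many disjoint, well-separated bi-Lipschitz copies of large pieces of $H_1$, and to add up their tubular volumes. For $u\in\ZI$ with $|u|$ large and lying in a suitable sector (say $u=a+bi$ with $a,b\geq 2$), and for $v\in\A$, the point $T_u(1/v)=1/(u+1/v)$ lies in $\intt Q$. Its first Hurwitz digit is $u$, because $1/v\in Q$ is small, and its remainder is $1/v\in H_1$. So $T_u(H_1)\subset H_2$ for all such $u$, possibly after discarding the finitely many $v$ with $1/v\in\partial Q$; those do not matter for volume. The images $T_u(Q)$, and hence $T_u(H_1)$, lie in the pairwise disjoint cells $1/(u+Q)$. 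Therefore
$$\leb^2((H_2)_\vep)\geq \sum_{u}\leb^2\big((T_u(H_1))_\vep\cap 1/(u+Q)\big).$$

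Next I compare each summand with a rescaled copy of $(H_1)_{\vep'}$. By Lemma~\ref{lem:tu-lip}, $T_u$ has derivative of modulus between $(|u|+1)^{-2}$ and $(|u|-1)^{-2}$ on $B(0,1)$. Hence $T_u\big((H_1)_{\vep (|u|-1)^2}\cap B(0,1/4)\big)\subset (T_u(H_1))_\vep$, and the image stays inside $1/(u+Q)$ once $\vep(|u|-1)^2$ is small, because $B(0,1/4)$ plus a small margin sits in $Q$. The area of that image is at least $(|u|+1)^{-4}\,\leb^2\big((H_1)_{\vep|u|^2/4}\cap B(0,1/4)\big)$. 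By Lemma~\ref{lem:content-1}, the part of $H_1$ in $B(0,1/4)$ (essentially $\{1/v:|v|>4\}$) has $\leb^2$ of its $\eta$-neighbourhood at least $c\,\eta$ for $\eta\leq\eta_0$, with $c$ a fixed absolute constant. To justify this, I rerun the computation in Lemma~\ref{lem:content-1} restricted to $u$ with $|u|>R_\delta$, which gives the same limit $4\pi\log(1+\sqrt2)$. This yields $c\approx 4\pi\log(1+\sqrt2)/2$ for small $\eta$, and I use the crude value $c=1$.

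Summing, with $\eta=\vep|u|^2/4$, each admissible $u$ with $|u|^2\leq 4\eta_0/\vep$ contributes at least $c\,\vep|u|^2/(4(|u|+1)^4)\geq c\,\vep/(16|u|^2)$. The sector $a,b\geq2$ contains a fixed proportion, about $1/4$, of the lattice points. Lemma-style lattice sums, as in the proof of Lemma~\ref{lem:cover-upper}, give $\sum_{R_0\leq|u|\leq (4\eta_0/\vep)^{1/2}}|u|^{-2}\gtrsim \tfrac{\pi}{2}\cdot\tfrac12\log(1/\vep)$. The total is then $\geq c'\vep\log(1/\vep)$, and tracking the constants loosely gives something comfortably above $10^{-3}$.

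The main obstacle is the middle step: getting a uniform lower bound $\leb^2((H_1')_\eta)\geq c\eta$ for all $\eta$ up to a fixed $\eta_0$, rather than only asymptotically. We also need the tube of $T_u(H_1)$ not to be cut off by the cell boundary. I would handle the first by a direct elementary count instead: the points $1/v$ with $|v|\asymp \eta^{-1/2}$ in the sector are $\gtrsim 2\eta$-separated, there are $\gtrsim \eta^{-1}$ of them, and they lie well inside $B(0,1/4)$. Disjoint balls then give $\leb^2\geq \pi\eta^2\cdot c''\eta^{-1}$, with explicit constants as in Lemma~\ref{lem:low-dim-1}. The second issue is handled by restricting to $B(0,1/4)$.
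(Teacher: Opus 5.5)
Your proposal is correct, but it handles the key step differently from the paper: how the contributions of the copies $T_u(H_1)\subset H_2$ are made to add up.

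\textbf{The paper's route.} It uses the whole neighbourhood $(H_1)_r$, with $\leb^2((H_1)_r)\geq 4r$ for $r<r_0$ taken from Lemma~\ref{lem:content-1}. This gives $\leb^2((T_u(H_1))_\vep)\geq \vep/(16|u|^2)$ whenever $|u|\geq 4$ and $\vep|u|^2/4<r_0$. It must then prove that these tubes are pairwise disjoint. To do so it passes to the sparse family $u\in 4\ZI$, arranged in dyadic shells $U_k$ with $4^K\leq c_2\vep^{-1}$, and checks $|T_{u_1}(w_1)-T_{u_2}(w_2)|\geq 1/(|u_1||u_2|)>3\vep$.

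\textbf{Your route.} You get disjointness for free from the disjoint cells $1/(u+Q)$. You count only the part of each tube that certainly lies in its own cell, namely $T_u\big((H_1)_\eta\cap B(0,1/4)\big)$, using $B(0,1/4)\subset Q$. This removes both the sublattice and the separation estimate, and leaves $\vep|u|^2/4\leq\eta_0$ as the only cutoff on $|u|$.

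\textbf{The price.} You need a linear lower bound for the truncated neighbourhood rather than for $(H_1)_\eta$ itself. The cleanest way is to bound $\leb^2((H_1)_\eta\cap B(0,1/4))$ from below by $\leb^2((H_1')_\eta)$, where $H_1'=\{1/v: v\in\A,\ |v|\geq 5\}$. The $\eta$-neighbourhood of $H_1'$ lies in $B(0,1/4)$ for $\eta<1/20$.

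\textbf{The ``main obstacle'' is not one.} The discarded head of $H_1$ is finite and costs only $O(\eta^2)$, so $\leb^2((H_1')_\eta)/\eta\to 4\pi\log(1+\sqrt2)$. A uniform bound $\geq c\eta$ for $\eta\leq\eta_0$ is then just the definition of this limit; this is exactly how the paper obtains $r_0$. Your packing alternative in the style of Lemma~\ref{lem:low-dim-1} also works. It gives $c$ of order $\pi/25$, well above the threshold $64\cdot 10^{-3}/\pi\approx 0.02$ imposed by the factor $c\pi/64$ your summation produces. It also has the merit of making the lemma independent of the exact content computation.

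\textbf{Unnecessary restrictions.} You do not need the sector $a,b\geq 2$, nor to discard the $v$ with $1/v\in\partial Q$. For every $|u|\geq 4$ and $v\in\A$ one has $|u+1/v|>2$. Hence $T_u(1/v)\in B(0,1/2)\subset Q$, with first Hurwitz digit $u$ and remainder $1/v\in H_1$.
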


\begin{proof}

By the definition of $H_1$, we have $(H_1)_r\subset B(0, 0.8+r)$, since
$$
	H_1=  \{1/u:u\in\A \} \subset \{1/u:u\in\ZI,\ |u| \geq \sqrt{2} \} \subset B(0,0.8).
$$
By Lemma~\ref{lem:content-1}, we can find $r_0$ small enough that $(H_1)_{r_0}\subset B(0,1)$ and
\begin{equation}\label{eq:G1-lower-for-G2}
	\leb^2\big((H_1)_r\big)  \geq  4r, 	\quad \forall 0<r<r_0 <1/12.
\end{equation}

For every $u\in\ZI$ with $|u| \geq 4$, we have
$$
	D_u \cap Q\cap  H_1=H_1.
$$
Thus $T_u(H_1)\subset H_2$.
Moreover, for any distinct $v,w\in B(0,1)$, we have
$$
	\frac{1}{4|u|^2} \leq \frac{1}{(|u|+1)^2}
	\leq \frac{ |T_u(v)-T_u(w)|}{|v-w|} \leq \frac{1}{(|u|-1)^2} \leq \frac{4}{|u|^2}.
$$
Hence $T_u$ is a bi-Lipschitz function.
In addition, for every $z\in B(0,1)$,
$$
	\big| \det DT_u(z) \big| = \frac{1}{|u+z|^4} \geq \frac{1}{16|u|^4}.
$$

For each $u \in \ZI$ and $\vep>0$, set $r(\vep,u)=\vep |u|^2 / 4.$
Whenever $|u|>2$ and $r(\vep,u)<r_0$, the preceding Lipschitz estimate implies that
$
	T_u((H_1)_{r(\vep,u)})\subset (T_u(H_1))_\vep.
$
Hence
$$
	\leb^2\big((T_u(H_1))_\vep\big)
	\geq \leb^2\big(T_u((H_1)_{r(\vep,u)})\big)
	\geq \frac{\leb^2\big((H_1)_{r(\vep,u)}\big)}{16|u|^4}.
$$
Applying~\eqref{eq:G1-lower-for-G2}, we obtain
\begin{equation}\label{eq:content-2-pass}
	\leb^2\big((T_u(H_1))_\vep\big)
	\geq \frac{\leb^2\big((H_1)_{r(\vep,u)}\big)}{16|u|^4}
	\geq \frac{4r(\vep,u)}{16|u|^4}
	= \frac{\vep}{16|u|^2}.
\end{equation}

We now sum over a separated family of Gaussian integers. For $k\in\Z^+$, define
$$
	E_k :=\big\{4a+4bi:\max\{|a|,|b|\}\leq 2^k,\ a,b\in\Z\big\},
$$
and set $U_k=E_{k}\setminus E_{k-1}$.
Then, for all $k\geq 2$,
\begin{equation}\label{eq:card-uk}
	\card U_k =(2^{k+1}+1)^2-(2^{k}+1)^2 \geq 3 \cdot 4^{k}.
\end{equation}
Moreover, if $u\in U_k$, then
\begin{equation}\label{eq:Uk-size}
	(2^{k+1})^2 \leq |u|^2 \leq (2^{k+2})^2 + (2^{k+2})^2  =2^{2k+5}.
\end{equation}

Fix an integer $k_0\geq 2$. Choose $c_2>0$ sufficiently small so that $c_2 <   r_0 /8 < 1/96.$
For each sufficiently small $\vep>0$, let $K=K(\vep)$ be the largest integer such that
$
	4^K\leq c_2\vep^{-1}.
$
We assume $\vep$ is small enough so that $K\geq k_0$. Set
$$
	\mathcal U_K=\bigcup_{k=k_0}^K U_k.
$$
For any distinct $ u_1,u_2\in\mathcal U_K$, we have $|u_1-u_2|\geq 4$ since $u_1,u_2\in 4\ZI$.
Since $k_0 \geq 2$, we also have $|u_1|,|u_2| \geq 8$.
Since $\sup_{x_1,x_2 \in H_1}|x_1-x_2| \leq \sqrt{2}$, for any $w_1,w_2\in H_1$ we have
$$
	|u_1-u_2+w_1-w_2| \geq |u_1-u_2|-|w_1-w_2| \geq 2.
$$
Hence
$$
	\big|T_{u_1}(w_1)-T_{u_2}(w_2) \big|
	= \Big| \frac{u_1+w_1-u_2-w_2}{(u_1+w_1)(u_2+w_2)} \Big|
	\geq \frac{2}{(|u_1|+1)(|u_2|+1)}
	\geq \frac{1}{|u_1||u_2|}.
$$
Moreover, $|u_1||u_2|\leq 2^{2K+5}$ by~\eqref{eq:Uk-size}.
Since $4^K \leq c_2 \vep^{-1}$ and $c_2<1/96$,
$$
	\big|T_{u_1}(w_1)-T_{u_2}(w_2) \big| \geq \frac{1}{|u_1||u_2|}
	\geq \frac{1}{32\cdot 4^K} \geq \frac{\vep}{32c_2} > 3\vep.
$$
Therefore, the $\vep$-neighbourhoods of the sets $T_u(H_1)$, $u\in\mathcal U_K$, are pairwise disjoint.

If $u\in U_k$ with $k_0\leq k\leq K$, then by~\eqref{eq:Uk-size},
$$
	r( \vep,u )={\vep |u|^2}/{4} \leq 8\vep 4^K \leq 8 c_2 < r_0.
$$
Thus~\eqref{eq:content-2-pass} applies to every $u\in\mathcal U_K$. Since $T_u(H_1)\subset H_2$, the disjointness gives
$$
	\leb^2\big((H_2)_\vep\big)
	\geq \sum_{k=k_0}^K\sum_{u\in U_k} \leb^2\big((T_u(H_1))_\vep\big)
	\geq \sum_{k=k_0}^K\sum_{u\in U_k} \frac{\vep}{16|u|^2}.
$$
Using~\eqref{eq:Uk-size} again, for $u\in U_k$, we have $|u|^2 \leq 32 \cdot 4^k$.
Therefore, by \eqref{eq:card-uk},
$$
	\leb^2\big((H_2)_\vep\big)
	\geq \sum_{k=k_0}^K  \frac{\card U_k\cdot \vep }{16 \cdot 32 \cdot 4^k}
	\geq \sum_{k=k_0}^K  \frac{3 \vep }{512}
	= \frac{3\vep}{512} (K-k_0+1).
$$
By the definition of $K$, $4^{K+1} \geq c_2 / \vep$.
Hence, for all sufficiently small $\vep>0$,
$$
	K +1  -k_0 \geq \log (c_2/\vep) / \log 4 -k_0 > \log(1/\vep) /2.
$$
Therefore, taking $c=10^{-3}$, we have
$$
	\leb^2\big((H_2)_\vep\big) \geq 10^{-3} \vep \log(1/\vep),
$$
for all sufficiently small $\vep>0$. This completes the proof.
\end{proof}

The exact first-level calculation and the logarithmic lower bound at the second level now yield the result for all levels.

\begin{proof}[Proof of Theorem~\ref{thm:content}]
For $m=1$, \eqref{eq:h1} and \eqref{eq:g1} imply that for any $\vep>0$,
$$
	\leb^2( (H_1)_\vep ) \leq \leb^2( (G_1)_\vep ) \leq\leb^2( (H_1)_\vep ) + 5 \pi \vep^2.
$$
Lemma~\ref{lem:content-1} therefore gives $\M^1(G_1)=\M^1 (H_1) = 4 \pi \log (1+ \sqrt{2})$.

For $m=2$, Lemma~\ref{lem:content-2} gives
$$
	 \varliminf_{\vep\to0^+} \leb^2\big((H_2)_\vep\big) / \vep \geq \varliminf_{\vep\to0^+} 10^{-3} \vep\log(1/\vep) / \vep= \infty.
$$
For all $m\geq2$, Lemmas~\ref{lem:subset-H-G} and~\ref{lem:closure-cover-H-2} give
$H_2\subset\overline{H_m} \subset \overline{G_m}$.
Since tubular neighbourhoods are unchanged under closure, we have $({H_2})_\vep \subset  (H_m)_\vep \subset  (G_m)_\vep$.
Therefore
$$
	\varliminf_{\vep\to0^+}   \leb^2\big((G_m)_\vep\big) /  \vep
	\geq \varliminf_{\vep\to0^+}   \leb^2\big((H_m)_\vep\big) /  \vep
	\geq \varliminf_{\vep\to0^+}\leb^2\big((H_2)_\vep\big) /\vep = \infty.
$$
Thus $\M^1(H_m)=\M^1(G_m)=\infty$ for every $m\geq2$.
This completes the proof.
\end{proof}

\subsection*{Acknowledgements} 
The authors thank Haipeng Chen, Junjie Miao, Yanqi Qiu, Huojun Ruan and Yufeng Wu for helpful discussions.

\bibliographystyle{amsplain}

\begin{thebibliography}{11}

\bibitem{BGH2025}
Y.~Bugeaud, G.~González Robert and M.~Hussain,
Metrical properties of Hurwitz continued fractions,
{\it Adv. Math.}, {\bf 468} (2025), paper no. 110208.

\bibitem{CJW}
H.~Chen, L.~Jiang and Y.~Wu,
{\em Representations of rational numbers and Minkowski dimension},
Acta Math. Hungar., to appear,
arXiv:2510.17112.

\bibitem{DK2002}
K.~Dajani and C.~Kraaikamp,
{\it Ergodic theory of numbers},
 Mathematical Association of America, (Washington, 2002).


\bibitem{F1990}
K.~Falconer,
{\it Fractal Geometry: Mathematical foundations and applications},
John Wiley $\&$ Sons, Ltd., (Chichester, 1990).

\bibitem{FLWW2009}
A.-H.~Fan, L.-M.~Liao, B.-W.~Wang and J.~Wu,
On Khintchine exponents and Lyapunov exponents of continued fractions,
{\it Ergodic Theory Dynam. Systems}, {\bf 29} (2009), no. 1, 73--109.

\bibitem{G2020}
G.~González Robert,
Good's theorem for Hurwitz continued fractions,
{\it Int. J. Number Theory}, {\bf 16} (2020), no. 7, 1433--1447.

\bibitem{H1887}
A.~Hurwitz,
Über die Entwicklung complexer Grössen in Kettenbrüche,
{\it Acta Math.}, {\bf 11} (1887), no. 1-4, 187--200.

\bibitem{K1964}
A.~Y.~Khinchin,
{\it Continued fractions},
University of Chicago Press, (Chicago, 1964).

\bibitem{LWWX2014}
B.~Li, B.-W.~Wang, J.~Wu and J.~Xu,
The shrinking target problem in the dynamical system of continued fractions,
{\it Proc. Lond. Math. Soc.}, {\bf 108} (2014), no. 1, 159--186.

\bibitem{LR2016}
L.-M.~Liao and M.~Rams,
Subexponentially increasing sums of partial quotients in continued fraction expansions,
{\it Math. Proc. Cambridge Philos. Soc.}, {\bf 160} (2016), no. 3, 401--412.


\bibitem{NT2025}
Y.~Nakajima and H.~Takahasi,
A problem of Hirst for the Hurwitz continued fraction and the Hausdorff dimension of sets with restricted slowly growing digits,
{\it Indag. Math.}, {\bf 37} (2026), no. 4, 1322--1340.


\bibitem{TTW2023}
B.~Tan, C.~Tian and B.-W.~Wang,
The distribution of the large partial quotients in continued fraction expansions,
{\it Sci. China Math.}, {\bf 66} (2023), no. 5, 935--956.

\bibitem{WW2008A}
B.-W.~Wang and J.~Wu,
Hausdorff dimension of certain sets arising in continued fraction expansions,
{\it Adv. Math.}, {\bf 218} (2008), no. 5, 1319--1339.

\bibitem{WW2008B}
B.-W.~Wang and J.~Wu,
A problem of Hirst on continued fractions with sequences of partial quotients,
{\it Bull. Lond. Math. Soc.}, {\bf 40} (2008), no. 1, 18--22.

\bibitem{WWX2016}
B.-W.~Wang, J.~Wu and J.~Xu,
A generalization of the Jarník--Besicovitch theorem by continued fractions,
{\it Ergodic Theory Dynam. Systems}, {\bf 36} (2016), no. 4, 1278--1306.



\end{thebibliography}

\end{document}